 \def\proofend{\hfill$\Box$} % same as: \def\qed{\hfill$\Box$} % for \square
\journalname{Fract. Calc. Appl. Anal.} %%%%% and FCAA-logo will appear on the right %%
\begin{document}

\title{Qualitative properties of solutions to a nonlinear time-space fractional diffusion equation%\thanks{Grants or other notes
%about the article that should go on the front page should be
%placed here. General acknowledgments should be placed at the end of the article.}
}

\titlerunning{Qualitative properties of solutions to a nonlinear ...}        % if too long for running head

\author{Meiirkhan B. Borikhanov \and  Michael Ruzhansky         \and Berikbol T. Torebek %etc.
}

\authorrunning{M. B. Borikhanov et al.} % if too long for running head

\institute{Meiirkhan B. Borikhanov \at Khoja Akhmet Yassawi International Kazakh--Turkish University, Sattarkhanov ave., 29, 161200 Turkistan, Kazakhstan \\Institute of
Mathematics and Mathematical Modeling, 125 Pushkin str., 050010 Almaty, Kazakhstan \at
            \email{meiirkhan.borikhanov@ayu.edu.kz}           %  \\
%             \emph{Present address:} of F. Author  %  if needed
           \and
  Michael Ruzhansky\at
Department of Mathematics: Analysis, Logic and Discrete Mathematics,
Ghent University, Ghent, Belgium
\\School of Mathematical Sciences, Queen Mary University of London, London, United Kingdom  \at 
\email{michael.ruzhansky@ugent.be}
\and
Berikbol T. Torebek$^*$ \at
Institute of Mathematics and Mathematical Modeling, 125 Pushkin str.,
050010 Almaty, Kazakhstan \\ Department of Mathematics: Analysis, Logic and Discrete Mathematics,
Ghent University, Ghent, Belgium \at
\email{berikbol.torebek@ugent.be, $^*$ corresponding author}}

\date{Received: 28 July 2022 / Revised: 14 November 2022 / Accepted: 24 November 2022}
% The correct dates will be entered by the editor

\maketitle
\begin{abstract}In the present paper, we study the Cauchy-Dirichlet problem to a nonlocal nonlinear
diffusion equation with polynomial nonlinearities
$$\mathcal{D}_{0|t}^{\alpha }u+(-\Delta)^s_pu=\gamma|u|^{m-1}u+\mu|u|^{q-2}u,\,\gamma,\mu\in\mathbb{R},\,m>0,q>1,$$ 
involving time-fractional Caputo derivative $\mathcal{D}_{0|t}^{\alpha}$ and space-fractional $p$-Laplacian operator $(-\Delta)^s_p$. 

We give a simple proof of the comparison principle for the considered problem using purely algebraic relations, for different sets of $\gamma,\mu,m$ and $q$.

The Galerkin approximation method is used to prove the existence of a local weak solution. The blow-up phenomena, existence of global weak solutions and asymptotic behavior of global solutions are classified using the comparison principle.

\keywords{fractional calculus \and quasilinear parabolic equation \and comparison principle \and blow-up and global solution}
\subclass{35R11 \and 35A01 \and 35B51 \and 35K55}
% \PACS{PACS code1 \and PACS code2 \and more}
% \subclass{MSC code1 \and MSC code2 \and more}
\end{abstract}

\section{Introduction}
\setcounter{section}{1} \setcounter{equation}{0}
In this paper, we study the initial-boundary value problem for the nonlinear time-space fractional diffusion equation 
\begin{equation}\label{01}
\left\{\begin{array}{l}
\mathcal{D}_{0|t}^{\alpha }u+(-\Delta)^s_pu=\gamma|u|^{m-1}u+\mu|u|^{q-2}u,\, \,(x,t)\in\Omega\times(0,T),\\{}\\
u(x,t)=0,\,x\in \mathbb{R}^N\setminus\Omega,\,t\in(0,T),\\{}\\
u(x,0)=u_0(x),\, x\in\Omega,\end{array}\right.\end{equation}
where $\Omega\subset \mathbb{R}^N$  is a smoothly bounded domain; $s\in(0,1), p\geq2, m>0, q\geq1$,  $\gamma,\mu\in \mathbb{R}$  and  $\mathcal{D}_{0|t}^{\alpha }$ is the left Caputo fractional derivative  of order $\alpha\in(0,1)$ (see Definition \ref{CD}). 

In recent years, the study of differential equations using non-local fractional operators has attracted a lot of interest. The time-space fractional diffusion equations could be applied to a wide range of applications, including finance, semiconductor research, biology and hydrogeology, continuum mechanics, phase transition phenomena, population dynamics, image process, game theory and Lévy processes, (see \cite{Lei}, \cite{Bertoin}, \cite{Caffarelli}, \cite{Laskin}, \cite{Gilboa}, \cite{Gal}, \cite{Andrade}) and the references therein. When a particle flow  spreads at a rate that defies Brownian motion theories, both time and spatial fractional derivatives (see \cite{Giga}, \cite{Nane}, \cite{Vazquez1}) can be employed to simulate anomalous diffusion or dispersion. 
Recently, motivated by some situations arising in the game theory, nonlinear generalizations of the fractional Laplacian have been introduced, (see \cite{Bjorland}, 
\cite{Caffarelli}).

Later on, the fractional version of the $p$-Laplacian was studied through energy and test function methods by Chambolle  and al. in \cite{Chambolle}. The viscosity version of this non-local operator was given by Ishii and al. in \cite{Ishii},  Bjorland and al. in \cite{Bjorland}.

In the case $\alpha=s=1,\,\gamma=-1$ the problem \eqref{01} coincides with a quasilinear parabolic equation which  has been studied by Li et al in \cite{Li}. By using a Gagliardo-Nirenberg type inequality, the energy method and
comparison principle, the phenomena of blow-up and extinction have been classified completely in the different ranges of reaction exponents.

Moreover, when $\alpha=s=1, m>1$ and the coefficients are $\gamma>0, \mu=0$, the problem \eqref{01} was considered by Yin and Jin in \cite{Yin}.
They determined the critical extinction and blow-up exponents for the homogeneous Dirichlet boundary value problem.

Vergara and Zacher in \cite{Vergara}  have considered nonlocal in time semilinear subdiffusion equations on a bounded domain, 
\begin{equation}\label{Z}
\left\{\begin{array}{l}
\mathcal{D}_{0|t}^{\alpha } u -\operatorname{div}(A(x,t)\nabla u)=f(u),\,x\in\Omega,\,\,t>0,\\
u(x,t)=0,\,\, x\in\partial\Omega,\,\,t>0, \\
u(x,0)=u_0(x),\, x\in\Omega,\end{array}\right.\end{equation}
where the coefficients $A=(a_{ij})$ were assumed to satisfy $$\left(A(x,t)\xi,\xi\right)\geq \nu|\xi|^2,\,\,\text{for a.e.}\,\,\,(x,t)\in \Omega\times(0,+\infty)\,\,\text{and all}\,\,\, \xi \in \mathbb{R}^N.$$ 
They proved a well-posedness
result in the setting of bounded weak solutions and studied the stability and instability of the zero function in
the special case where the nonlinearity vanishes at 0. In addition, they established a blow-up result for positive convex
and superlinear nonlinearities.

Later on, Alsaedi et al. \cite{Alsaedi} have studied the KPP-Fisher-type reaction-diffusion equation, which is the problem \eqref{01} in the case $p=2, \gamma=-1, q=3$ and $\mu=m=1$,  in a bounded domain. 
Under some conditions on the initial data, they have showed that solutions may experience blow-up in a finite time. However, for realistic initial conditions, solutions are global in time. Moreover, the asymptotic behavior of bounded solutions was analysed.

Recently, in \cite{Tuan}, Tuan, Au and Xu studied the initial-boundary value problem for the fractional pseudo-parabolic equation with
fractional Laplacian
\begin{equation}\label{PsPF}
\left\{\begin{array}{l}
\mathcal{D}_{0|t}^{\alpha} (u-m \Delta u)+(-\Delta)^s u=\mathcal{N}(u),  \, x\in\Omega,\,t>0,\\
u(x,t)=0,\, x\in\partial\Omega, t>0,\\
u(x,0)=u_0(x),\, x\in\Omega,\end{array}\right.\end{equation}
where $s\in(0,1), m>0$ is a constant, and $\mathcal{N}(u)$ is the source
term satisfying one of the following conditions:
\begin{description}
\item[(a)] $\mathcal{N}(u)$ is a globally Lipschitz function;
\item[(b)] $\mathcal{N}(u)=|u|^{p-2}u,\,p\geq 2;$
\item[(c)] $\mathcal{N}(u)=|u|^{p-2}u\log |u|,\,\,p\geq 2.$
\end{description}
For the above cases, they proved the existence of a unique local mild solution and finite time blow-up solution to equation \eqref{PsPF}.
Because of the nonlocality of the equation, the authors believe that proving the existence of a weak solution using the Galerkin method for equation \eqref{PsPF} is problematic.

Motivated by the above results, in this paper we consider the time and space fractional quasilinear parabolic equation \eqref{01}. 

Using the Galerkin method, we prove the existence of a local weak solution to problem \eqref{01}.

This, in turn, partially answers the question posed in \cite{Tuan} about the existence of a local weak solution to the fractional pseudo-parabolic equation. In addition, a comparison principle to problem \eqref{01} is obtained, and we have investigated results on the blow-up and global solution using this concept.

\section{Preliminaries}
\setcounter{section}{2} \setcounter{equation}{0}
\subsection{The fractional Sobolev space} In this subsection, let us  recall some necessary definitions and useful properties of the fractional Sobolev space.

Let $s\in(0,1)$ and $p\in[1,+\infty)$ be real numbers, and let the fractional critical exponent be defined as $\large \displaystyle p_c^*=\frac{Np}{N-sp}$ if $sp<N$ or $p_c^*=\infty$, otherwise. 

One defines the fractional Sobolev space as follows 
\begin{equation*}
W^{s,p}(\mathbb{R}^N):=\biggl\{u\in L^p(\mathbb{R}^N), \frac{|u(x)-u(y)|}{|x-y|^{\frac{N}{p}+s}}\in L^p(\mathbb{R}^N\times\mathbb{R}^N)\biggr\}.    
\end{equation*}

This is the Banach space between $L^p(\mathbb{R}^N)$ and $W^{1,p}(\mathbb{R}^N)$, endowed with the norm
$$\|u\|_{W^{s,p}(\mathbb{R}^N)}:=\|u\|_{L^p{(\mathbb{R}^N})} +\biggl(\int_{\mathbb{R}^N}\int_{\mathbb{R}^N}\frac{|u(x)-u(y)|^p}{|x-y|^{N+sp}}dxdy\biggr)^\frac{1}{p}.$$

Let $\Omega$ be an open set in $\mathbb{R}^N$ and let $\mathcal{W}=(\mathbb{R}^N\times\mathbb{R}^N)\backslash ((\mathbb{R}^N\backslash\Omega)\times(\mathbb{R}^N\backslash\Omega))$. It is obvious that $\Omega\times\Omega$ is strictly contained in $\mathcal{W}$.

Denote
\begin{equation*}
W^{s,p}(\Omega):=\biggl\{u\in L^p(\Omega),\, u=0\,\,  \text{in}\,\, \mathbb{R}^N\backslash\Omega,\,  \frac{|u(x)-u(y)|}{|x-y|^{\frac{N}{p}+s}}\in L^p(\mathcal{W})\biggr\}.    
\end{equation*}

The space $W^{s,p}(\Omega)$ is also endowed with the norm
$$\|u\|_{W^{s,p}(\Omega)}:=\|u\|_{L^p(\Omega)} +[u]_{W^{s,p}(\Omega)},$$
where the term
$$[u]_{W^{s,p}(\Omega)}:=\biggl(\int_\Omega\int_\Omega\frac{|u(x)-u(y)|^p}{|x-y|^{N+sp}}dxdy\biggr)^\frac{1}{p}<\infty$$
is the so-called Gagliardo semi-norm of $u$, which was introduced by Gagliardo \cite{Gagliardo} to describe the trace spaces of Sobolev maps.  

We refer to \cite{Valdinoci} and \cite{Brasco1}, where one can find a description of the most useful properties of the fractional Sobolev spaces $W^{s,p}(\Omega)$. In the literature, fractional Sobolev-type spaces are also called \textit{Aronszajn, Gagliardo} or \textit{Slobodeckij spaces}, by the name of the people who first introduced them, practically concurrently (see \cite{Aronszajn,Gagliardo1,Slobodeckij}).

For Gagliardo semi-norms, the next result is a Poincar\'{e} inequality. This is standard, but we should also always pay careful attention to the sharp constants  dependence on $s$.

\begin{lemma}[\cite{Valdinoci}, Theorem 6.7]\label{EMBD} Let $s\in(0,1)$ and $p\in[1,+\infty)$ be such that $sp<N$. Let $\Omega\subseteq \mathbb{R}^N$ be an extension domain for $W^{s,p}$. Then, there exists a positive constant $C=C(N,p,s,\Omega)$ such that, for any $u\in W^{s,p}(\Omega)$, we have
$$\|u\|_{L^q(\Omega)}\leq C\|u\|_{W^{s,p}(\Omega)},$$
for any $q\in[p,p^*],$ where $\large\displaystyle p^*=p^*(N,s)=\frac{Np}{N-sp}$ is the so-called fractional critical exponent. 

That means, the space $W^{s,p}(\Omega)$ is continuously embedded in $L^q(\Omega)$ for $q\in[p,p^*].$ If, in addition, $\Omega$ is bounded, then, the space $W^{s,p}(\Omega)$ is continuously embedded in $L^q(\Omega)$ for $q\in[1,p^*].$
\end{lemma}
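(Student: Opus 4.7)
\medskip
\noindent\textbf{Proof plan.} The plan is to reduce everything to a single critical embedding $W^{s,p}(\mathbb{R}^N)\hookrightarrow L^{p^*}(\mathbb{R}^N)$ and then propagate it to $\Omega$ and to intermediate exponents by soft arguments. Since $\Omega$ is an extension domain for $W^{s,p}$, there is a bounded linear extension operator $E\colon W^{s,p}(\Omega)\to W^{s,p}(\mathbb{R}^N)$ with $Eu\big|_\Omega=u$ and $\|Eu\|_{W^{s,p}(\mathbb{R}^N)}\le C_\Omega\|u\|_{W^{s,p}(\Omega)}$. So it suffices to prove the inequality for $v\in W^{s,p}(\mathbb{R}^N)$ and $q=p^*$, and then restrict back to $\Omega$.

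The heart of the argument is the critical estimate $\|v\|_{L^{p^*}(\mathbb{R}^N)}\le C(N,s,p)[v]_{W^{s,p}(\mathbb{R}^N)}$ for $v\in C_c^\infty(\mathbb{R}^N)$, after which one passes to the limit by density. I would prove it by a layer-cake / dyadic level-set argument: writing $\|v\|_{L^{p^*}}^{p^*}$ as a Cavalieri integral and splitting the $\tau$-axis along a dyadic scale $\tau=2^k$, one bounds $|\{|v|>2^{k+1}\}|$ using the elementary fact that, for every $x$ with $|v(x)|>2^{k+1}$ and every $y$ with $|v(y)|\le 2^{k-1}$, one has $|v(x)-v(y)|>2^k$. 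Integrating $|v(x)-v(y)|^p/|x-y|^{N+sp}$ over a ball of radius $r_k$ in $y$ and optimising $r_k$ so as to balance the mass of $\{|v|\le 2^{k-1}\}\cap B(x,r_k)$ against the threshold produces an estimate of $|\{|v|>2^{k+1}\}|$ in terms of the localised Gagliardo energy; summing the resulting geometric series in $k\in\mathbb{Z}$ yields the critical embedding.

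With that step in hand, the remaining conclusions are routine. For $q=p$ the claim is contained in the very definition of the norm, and for $q=p^*$ it follows from the previous paragraph combined with the extension operator. For $q\in(p,p^*)$ I would interpolate by H\"older, writing $1/q=\theta/p+(1-\theta)/p^*$ to obtain $\|u\|_{L^q(\Omega)}\le \|u\|_{L^p(\Omega)}^\theta\|u\|_{L^{p^*}(\Omega)}^{1-\theta}$ and then applying the two endpoint estimates. When $\Omega$ is bounded, the inclusion $L^p(\Omega)\subset L^q(\Omega)$ for $q\in[1,p]$ follows from H\"older, $\|u\|_{L^q(\Omega)}\le |\Omega|^{1/q-1/p}\|u\|_{L^p(\Omega)}$, which extends the permissible range down to $q=1$.

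The main obstacle is the dyadic level-set estimate: the appearance of the precise critical exponent $p^*=Np/(N-sp)$ is forced by a careful scaling/optimisation of the ball radius $r_k$ at each dyadic level, and the combinatorics of summing the level-set contributions requires two H\"older applications (one inside each ball integral, and one over $k\in\mathbb{Z}$). Once the critical embedding on $\mathbb{R}^N$ is secured, every other ingredient (density of $C_c^\infty$, extension, interpolation, bounded-domain inclusion) is formal.
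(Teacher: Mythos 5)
The paper does not prove this lemma---it is quoted verbatim as \cite[Theorem~6.7]{Valdinoci} (the ``Hitchhiker's guide'' of Di Nezza, Palatucci and Valdinoci), so there is no in-paper proof to compare against. Your plan reproduces essentially the argument given in that reference: the critical case $q=p^*$ on $\mathbb{R}^N$ for compactly supported smooth functions via the elementary dyadic level-set (Cavalieri) estimate, passage to general $u\in W^{s,p}(\mathbb{R}^N)$ by density, reduction from $\Omega$ to $\mathbb{R}^N$ via a bounded extension operator (this is exactly where the extension-domain hypothesis enters), interpolation by H\"older for $p<q<p^*$, and the factor $|\Omega|^{1/q-1/p}$ from H\"older's inequality to cover $q\in[1,p]$ when $|\Omega|<\infty$. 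Your elementary observation $|v(x)-v(y)|\ge|v(x)|-|v(y)|>2^{k+1}-2^{k-1}>2^k$ is the right starting point for the level-set bound, and you have correctly identified that the only genuinely technical step is the dyadic optimization yielding the critical exponent. No gap; the sketch is sound and matches the cited source's method.
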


\begin{lemma}[\cite{Nguyen}, Lemma 2.1. Fractional Gagliardo–Nirenberg inequality]\label{FCKN} Let $p>1, \tau>0, N,q\geq1, 0<s<1$ and $0 <a\leq1$ be such that $$\frac{1}{\tau}=a\left(\frac{1}{p}-\frac{s}{N}\right)+\frac{1-a}{q}.$$

We have 
$$\|u\|_{L^\tau(\mathbb{R}^N)}\leq C[u]^a_{W^{s,p}(\mathbb{R}^N)}\|u\|^{(1-a)}_{L^q(\mathbb{R}^N)},\,\,\text{for}\,\,\,u\in C^1_c(\mathbb{R}^N),$$for some positive constant $C$ independent of $u$.\end{lemma}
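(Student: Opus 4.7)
The plan is to reduce the statement to a combination of a Hölder interpolation and the fractional Sobolev embedding on $\mathbb{R}^N$, the latter of which is exactly the whole-space analogue of Lemma \ref{EMBD} that has already been invoked in the paper. I would first rewrite the exponent relation in the form
$$\frac{1}{\tau}=\frac{a}{p^{\star}}+\frac{1-a}{q},\qquad p^{\star}=\frac{Np}{N-sp},$$
so that in the subcritical regime $sp<N$ the exponent $\tau$ appears as a convex combination of the fractional critical exponent and the base exponent $q$.

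The core step is then a routine Hölder interpolation. Writing $|u|^{\tau}=|u|^{a\tau}\,|u|^{(1-a)\tau}$ and applying Hölder's inequality to the conjugate pair $\bigl(\tfrac{p^{\star}}{a\tau},\tfrac{q}{(1-a)\tau}\bigr)$ — which are genuine conjugate exponents precisely because of the identity above — yields
$$\|u\|_{L^{\tau}(\mathbb{R}^N)}\leq \|u\|_{L^{p^{\star}}(\mathbb{R}^N)}^{a}\,\|u\|_{L^{q}(\mathbb{R}^N)}^{1-a}.$$
Next I would quote the fractional Sobolev embedding on $\mathbb{R}^N$ (the whole-space version of Lemma \ref{EMBD}, valid for $u\in C_{c}^{1}(\mathbb{R}^N)$), which gives
$$\|u\|_{L^{p^{\star}}(\mathbb{R}^N)}\leq C\,[u]_{W^{s,p}(\mathbb{R}^N)}.$$
Plugging this into the interpolation inequality produces the desired estimate with a constant $C$ depending only on $N,p,s,q,a$ and not on $u$.

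The only case that requires extra care is the critical/supercritical regime $sp\geq N$, where $p^{\star}=\infty$ and one cannot interpolate with the critical Lebesgue space directly. Here I would replace $p^{\star}$ by any sufficiently large finite exponent $r$ for which $W^{s,p}(\mathbb{R}^N)\hookrightarrow L^{r}(\mathbb{R}^N)$ holds (using Morrey-type embeddings or standard extension arguments on the Bessel/Slobodeckij scale), redo the same Hölder interpolation between $L^{r}$ and $L^{q}$, and then let $r$ reflect the exponent condition; the admissibility $0<a\leq 1$ together with $\tau>0$ and $q\geq 1$ guarantees that the Hölder pair is legitimate. The main technical obstacle is precisely verifying the validity of the Hölder step — i.e., that the pair $\bigl(\tfrac{p^{\star}}{a\tau},\tfrac{q}{(1-a)\tau}\bigr)$ consists of exponents in $[1,\infty]$ — which is forced by the prescribed relation among $\tau,p,q,s,N,a$; once this bookkeeping is done, the rest of the argument is the standard reduction to the Sobolev embedding.
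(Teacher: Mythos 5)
The paper does not actually prove this lemma: it is quoted verbatim from [Nguyen, Lemma~2.1] (Nguyen--Squassina), so there is no ``paper's own proof'' to compare against. That said, your argument is the standard and correct derivation in the subcritical range $sp<N$: rewrite the exponent relation as $\frac{1}{\tau}=\frac{a}{p^{\star}}+\frac{1-a}{q}$, interpolate via H\"older between $L^{p^{\star}}$ and $L^{q}$ (the conjugacy is exactly the exponent identity, and the exponents $\frac{p^{\star}}{a\tau}$, $\frac{q}{(1-a)\tau}$ are automatically $\geq 1$ once they are positive and reciprocally sum to one), and then apply the fractional Sobolev inequality on $\mathbb{R}^N$, $\|u\|_{L^{p^{\star}}(\mathbb{R}^N)}\leq C[u]_{W^{s,p}(\mathbb{R}^N)}$. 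Since the paper only invokes Lemma~\ref{FCKN} under the standing hypothesis $sp<N$ (cf.\ Theorem~\ref{thloc}), this already covers everything that is used.

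Your treatment of the critical/supercritical case $sp\geq N$, however, contains a genuine gap. You propose to replace $p^{\star}$ by a ``sufficiently large finite $r$'' with $W^{s,p}(\mathbb{R}^N)\hookrightarrow L^{r}(\mathbb{R}^N)$, but no such $r$ can work if only the Gagliardo \emph{seminorm} appears on the right-hand side. The dilation $u_\lambda(x)=u(\lambda x)$ gives $[u_\lambda]_{W^{s,p}(\mathbb{R}^N)}=\lambda^{s-N/p}[u]_{W^{s,p}(\mathbb{R}^N)}$ and $\|u_\lambda\|_{L^r(\mathbb{R}^N)}=\lambda^{-N/r}\|u\|_{L^r(\mathbb{R}^N)}$, so any inequality $\|u\|_{L^r}\leq C[u]_{W^{s,p}}$ with a constant independent of $u$ forces $1/r=1/p-s/N$, i.e.\ $r=p^{\star}$; when $sp\geq N$ this has no admissible finite solution, and indeed a bump of height one on a ball of radius $R$ shows $[u]_{W^{s,p}}\to 0$ as $R\to\infty$ while $\|u\|_{L^r}\to\infty$. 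So the supercritical case cannot be obtained by the same H\"older-plus-embedding scheme with a single seminorm term; one would instead have to argue as in Nguyen--Squassina, for instance via a Riesz-potential representation and Young's convolution inequality, or simply restrict the statement to $sp<N$, as the rest of the paper effectively does.
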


\subsection{Fractional operators} This part is devoted to the definitions and properties of fractional derivatives in time and space.

\begin{definition}[\cite{Kilbas}, p. 69]\label{RD} The left and right Riemann-Liouville fractional integrals of order $0<\alpha<1$ for an integrable function $u(t)$ are given by  
$$I^\alpha_{0|t}u(t)=\frac{1}{\Gamma \left( \alpha  \right)}\int\limits_{0}^{t}{{{\left( t-s \right)}^{\alpha -1}}}u\left( s \right)ds, \,\,\,t\in (0,T]$$
and
$$I^\alpha_{t|T}u(t)=\frac{1}{\Gamma \left( \alpha  \right)}\int\limits_{t}^{T}{{{\left( s-t \right)}^{\alpha -1}}}u\left( s \right)ds, \,\,\,t\in [0,T).$$
\end{definition}
\begin{definition}[\cite{Kilbas}, p. 70] The left and right Riemann-Liouville fractional derivatives $\mathbb{D}_{0|t}^{\alpha }$  of order $\alpha \in(0,1)$, for an absolutely continuous function $u(t)$ is defined by 
$$\mathbb{D}_{0|t}^{\alpha } u(t)=\frac{d}{dt}I_{{0|t}}^{1-\alpha } u(t)=\frac{1}{\Gamma(1-\alpha)}\frac{d}{dt}\int\limits_{0}^{t}{{(t-s)^{-\alpha }}}{u}\left( s \right)ds,\,\,\, \forall t\in (0,T]$$
and
$$\mathbb{D}_{t|T}^{\alpha } u(t)=-\frac{d}{dt}I_{t|T}^{1-\alpha } u(t)=-\frac{1}{\Gamma(1-\alpha)}\frac{d}{dt}\int\limits_{t}^{T}{{(s-t)^{-\alpha }}}{u}\left( s \right)ds,\,\,\, \forall t\in [0,T).$$\end{definition}

\begin{lemma}[\cite{Kilbas}, Lemma 2.20]\label{DI} If $\alpha>0,$ then for $u\in L^1(0,T)$, the relations 
$$\mathbb{D}_{0|t}^{\alpha }I_{0|t}^{\alpha } u(t)=u(t)\,\,\,\text{and}\,\,\,\mathbb{D}_{t|T}^{\alpha }I_{t|T}^{\alpha }u(t)=u(t)$$
are true.

\end{lemma}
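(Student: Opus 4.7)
The plan is to reduce everything to the semigroup property of the Riemann--Liouville fractional integrals, namely $I^{\alpha}_{0|t} I^{\beta}_{0|t} u = I^{\alpha+\beta}_{0|t} u$ for $\alpha,\beta > 0$ and $u \in L^1(0,T)$, and then to invoke the Lebesgue differentiation theorem to pull out the remaining integer-order derivative.

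First I would prove the semigroup identity by a direct computation. Writing out
\[
I^{\alpha}_{0|t} I^{\beta}_{0|t} u(t) = \frac{1}{\Gamma(\alpha)\Gamma(\beta)} \int_0^t (t-\tau)^{\alpha-1} \int_0^\tau (\tau-s)^{\beta-1} u(s)\,ds\,d\tau,
\]
I would swap the order of integration by Fubini's theorem (justified since $u\in L^1(0,T)$ and the kernel is nonnegative), yielding an inner integral $\int_s^t (t-\tau)^{\alpha-1}(\tau-s)^{\beta-1}\,d\tau$. The substitution $\tau = s + (t-s)\sigma$ reduces this to a Beta integral equal to $B(\alpha,\beta)(t-s)^{\alpha+\beta-1} = \frac{\Gamma(\alpha)\Gamma(\beta)}{\Gamma(\alpha+\beta)}(t-s)^{\alpha+\beta-1}$, which gives precisely $I^{\alpha+\beta}_{0|t} u(t)$.

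Next, focusing on the principal case $\alpha \in (0,1)$ (the one used in the paper), I would apply the semigroup identity with the pair $(1-\alpha,\alpha)$ to write
\[
\mathbb{D}^{\alpha}_{0|t} I^{\alpha}_{0|t} u(t) = \frac{d}{dt}\,I^{1-\alpha}_{0|t} I^{\alpha}_{0|t} u(t) = \frac{d}{dt}\,I^{1}_{0|t} u(t) = \frac{d}{dt}\int_0^t u(s)\,ds = u(t),
\]
the last equality holding for a.e.\ $t\in(0,T)$ by the Lebesgue differentiation theorem, since $u\in L^1(0,T)$. For general $\alpha>0$, writing $\alpha = n-1+\beta$ with $\beta\in(0,1]$ and $n=\lceil\alpha\rceil$, the same argument with $\mathbb{D}^{\alpha}_{0|t} = \frac{d^n}{dt^n}\,I^{n-\alpha}_{0|t}$ and the semigroup identity reduces the composition to $\frac{d^n}{dt^n}\,I^{n}_{0|t} u = u$ a.e. The right-sided identity $\mathbb{D}^{\alpha}_{t|T} I^{\alpha}_{t|T} u = u$ follows verbatim by the change of variables $t\mapsto T-t$, which interchanges the left and right Riemann--Liouville operators.

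The only genuine subtlety — and the step I would single out as the main obstacle — is justifying the interchange of differentiation and integration when passing from $I^{1}_{0|t} u$ to $u$: this is exactly where one needs $u\in L^1$ rather than pointwise smoothness, and the conclusion is only almost-everywhere. Everything else (Fubini, the Beta integral, the semigroup composition) is routine once the integrability is in place.
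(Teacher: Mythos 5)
The paper does not prove this lemma; it simply cites Lemma 2.20 of Kilbas--Srivastava--Trujillo, and the standard proof given there is exactly the one you reconstruct: the semigroup property $I^{1-\alpha}_{0|t}I^{\alpha}_{0|t}=I^{1}_{0|t}$ via Fubini and the Beta integral, followed by Lebesgue differentiation of $I^{1}_{0|t}u$, with the right-sided identity obtained by the reflection $t\mapsto T-t$. Your argument is correct and complete (the one cosmetic point is that Fubini should be invoked after a Tonelli step on $|u|$ to secure absolute integrability, since $u$ need not be nonnegative, but you clearly have this in mind).
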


\begin{definition}[\cite{Kilbas}, p. 91]\label{CD} The $\alpha \in (0,1)$ order of left and right Caputo fractional derivatives for $u\in C^1([0,T])$ are defined, respectively, by 
$$\mathcal{D}_{0|t}^{\alpha } u(t)=I_{0|t}^{1-\alpha } \frac{d}{dt}u(t)=\frac{1}{\Gamma(1-\alpha)}\int\limits_{0}^{t}{{(t-s)^{-\alpha }}}{u'}\left( s \right)ds,\,\,\, \forall t\in (0,T]$$
and
$$\mathcal{D}_{t|T}^{\alpha } u(t)=-I_{t|T}^{1-\alpha }\frac{d}{dt} u(t)=-\frac{1}{\Gamma(1-\alpha)}\int\limits_{t}^{T}{{(s-t)^{-\alpha }}}{u'}\left( s \right)ds,\,\,\, \forall t\in [0,T).$$
\end{definition}

If $u\in C^1([0,T])$, then the Caputo fractional derivative can be represented by the Riemann-Liouville fractional derivative in the following form
$$\mathcal{D}_{0|t}^{\alpha } u(t)=\mathbb{D}_{0|t}^{\alpha }[u(t)-u(0)],\,\,\, \forall t\in (0,T]$$ and
$$\mathcal{D}_{t|T}^{\alpha } u(t)=\mathbb{D}_{t|T}^{\alpha }[u(t)-u(T)],\,\,\, \forall t\in [0,T).$$

\begin{lemma}[\cite{Zacher}, Corollary 4.1] \label{ZAC} Let $T>0$ and let $U$ be an open subset of $\mathbb{R}$. Let further $u_0\in U,$ $k\in H^1_1(0,T), H\in C^1(U)$ and $u\in L^1(0,T)$ with $u(t)\in U,$ for a. a. $t\in (0,T)$. Suppose that the functions $H(u), H'(u)u$, and $H'(u)(k_t*u)$ belong to $L^1(0,T)$ (which is the case if, e.g., $u\in L^\infty(0,T)$). Assume in addition that $k$ is nonnegative and nonincreasing and that $H$ is convex. Then
$$H'(u(t))\frac{d}{dt}(k*[u-u_0])(t)\geq\frac{d}{dt}(k*[H(u)-H(u_0)])(t),\,\,\,t\in(0,T).$$\end{lemma}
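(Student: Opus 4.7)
My plan is to combine the pointwise convexity of $H$ with the sign properties of the kernel $k$ and its derivative. Since $H \in C^1(U)$ is convex, the tangent line inequality $H(b) - H(a) \leq H'(b)(b-a)$ holds for every $a, b \in U$; applying it with $a \in \{u_0,\, u(s)\}$ and $b = u(t)$ yields the two pointwise bounds
$$H'(u(t))\bigl[u(t) - u_0\bigr] \geq H(u(t)) - H(u_0), \qquad H'(u(t))\bigl[u(t) - u(s)\bigr] \geq H(u(t)) - H(u(s)).$$

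The main analytic step is to represent $\tfrac{d}{dt}(k \ast [w-w_0])$ in a form exposing nonnegative ``weights''. For smooth $w$, a short computation (integrate by parts using $\tfrac{d}{ds}[w(s)-w(t)]$ to cancel the boundary singularity at $s=t$) gives the identity
$$\frac{d}{dt}\bigl(k \ast [w-w_0]\bigr)(t) = k(t)\bigl[w(t) - w_0\bigr] + \int_0^t \bigl(-k'(t-s)\bigr)\bigl[w(t) - w(s)\bigr]\,ds.$$
The hypothesis that $k$ is nonnegative and nonincreasing guarantees $k(t) \geq 0$ and $-k'(t-s) \geq 0$.

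Applying this identity first to $(w,w_0) = (u,u_0)$ and then to $(w,w_0) = (H(u), H(u_0))$, multiplying the former by $H'(u(t))$ and subtracting, I obtain
$$H'(u(t))\frac{d}{dt}(k \ast [u-u_0])(t) - \frac{d}{dt}(k \ast [H(u) - H(u_0)])(t) = k(t)\,R_0(t) + \int_0^t (-k'(t-s))\,R(t,s)\,ds,$$
where $R_0(t) := H'(u(t))[u(t)-u_0] - [H(u(t))-H(u_0)]$ and $R(t,s) := H'(u(t))[u(t)-u(s)] - [H(u(t))-H(u(s))]$. By the convexity bounds above, $R_0(t) \geq 0$ and $R(t,s) \geq 0$, and the prefactors $k(t)$ and $-k'(t-s)$ are also nonnegative. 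Hence the right-hand side is nonnegative, which is exactly the claimed inequality.

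The main obstacle is justifying the representation identity in Step 2 when $k$ may be singular at $0$ (as in the Caputo case $k(t) = t^{-\alpha}/\Gamma(1-\alpha)$) and when $u$ is merely in $L^1$ rather than smooth. The standard remedy is to regularize: replace $k$ by the shifted bounded kernel $k_\varepsilon(t) := k(t+\varepsilon)$, which inherits nonnegativity and monotonicity and lies in $H^1_1$, and approximate $u$ by smooth functions $u_n$. The identity and the inequality hold at the regularized level by classical calculus; one then passes to the limits $n \to \infty$ and $\varepsilon \to 0^+$ via dominated convergence. The integrability hypotheses that $H(u)$, $H'(u)u$, and $H'(u)(k_t \ast u)$ lie in $L^1(0,T)$ are precisely what is needed to obtain the dominations required in this passage to the limit.
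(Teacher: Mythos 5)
The paper does not contain a proof of this lemma; it is stated with a direct citation to Zacher's Corollary~4.1, so there is no internal proof to compare against. That said, your argument is correct and is essentially the argument in the cited reference. The key ingredients you identify are exactly the standard ones: (i) the convexity bounds
$$H'(u(t))[u(t)-u_0]\geq H(u(t))-H(u_0),\qquad H'(u(t))[u(t)-u(s)]\geq H(u(t))-H(u(s)),$$
and (ii) the so-called fundamental identity
$$\frac{d}{dt}\bigl(k*[w-w_0]\bigr)(t)=k(t)\bigl[w(t)-w_0\bigr]+\int_0^t\bigl(-k'(t-s)\bigr)\bigl[w(t)-w(s)\bigr]\,ds,$$
which exposes the nonnegative weights $k(t)$ and $-k'(t-s)$. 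Applying the identity to $u$ and to $H(u)$, multiplying the former by $H'(u(t))$, subtracting, and invoking the convexity bounds and the sign of $k$, $k'$ gives the claimed inequality.

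One small clarification to your limiting argument: under the stated hypothesis $k\in H^1_1(0,T)$ (so $k'\in L^1$) together with $k$ nonincreasing, $k$ is automatically bounded at $0^+$, since $k(0^+)=k(t)+\int_0^t(-k'(r))\,dr<\infty$. Therefore the shift regularization $k_\varepsilon(t)=k(t+\varepsilon)$ is superfluous at this level; the only approximation genuinely required is of $u\in L^1$ by smooth functions, using the stated integrability of $H(u)$, $H'(u)u$ and $H'(u)(k_t*u)$ to pass to the limit. The truly singular kernels (such as the Caputo kernel $g_{1-\alpha}(t)=t^{-\alpha}/\Gamma(1-\alpha)$, for which $k'\notin L^1$) are not covered by the lemma as stated with $k\in H^1_1$, and in the literature are reached by a further approximation of $k$ itself, as you anticipate; that extra step is outside the scope of this particular statement.
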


\begin{lemma}[\cite{AA}, Lemma 1]\label{AA} For     
 $0<\alpha<1$ and any function $u(t)$ absolutely continuous and real-valued on $[0,T]$, one has the inequality
\begin{equation*}\begin{split}
\frac{1}{2}\mathcal{D}_{0|t}^{\alpha } (u^2)(t)&\leq u(t)\mathcal{D}_{0|t}^{\alpha } u(t).
\end{split}\end{equation*}
\end{lemma}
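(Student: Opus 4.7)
The plan is to reduce the inequality to the manifest non-negativity of a single explicit integral by unfolding the Caputo derivative according to Definition~\ref{CD} and performing one integration by parts in the resulting Volterra integral. First I would use $(u^2)'(s)=2u(s)u'(s)$ to rewrite the difference as
$$u(t)\,\mathcal{D}_{0|t}^{\alpha}u(t)-\tfrac{1}{2}\mathcal{D}_{0|t}^{\alpha}(u^2)(t)=\frac{1}{\Gamma(1-\alpha)}\int_0^t (t-s)^{-\alpha}\bigl[u(t)-u(s)\bigr]u'(s)\,ds,$$
so that the proof reduces to showing non-negativity of this integral for each fixed $t\in(0,T]$.

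The key algebraic identity is that, with $t$ held fixed, $[u(t)-u(s)]\,u'(s)=-\tfrac{1}{2}\tfrac{d}{ds}[u(t)-u(s)]^2$. Substituting this and integrating by parts against the kernel $(t-s)^{-\alpha}$, whose $s$-derivative equals $\alpha(t-s)^{-\alpha-1}$, should produce
$$\int_0^t (t-s)^{-\alpha}[u(t)-u(s)]u'(s)\,ds=\frac{t^{-\alpha}}{2}[u(t)-u(0)]^2+\frac{\alpha}{2}\int_0^t (t-s)^{-\alpha-1}[u(t)-u(s)]^2\,ds,$$
provided the boundary contribution at the singular endpoint $s=t$ vanishes. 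Both terms on the right-hand side are visibly non-negative, so dividing by $\Gamma(1-\alpha)>0$ yields the required inequality.

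The main obstacle is the rigorous justification of the integration by parts, since the kernel $(t-s)^{-\alpha}$ is singular at the upper endpoint. This is precisely where absolute continuity of $u$ enters: writing $u(t)-u(s)=\int_s^t u'(r)\,dr$ and using $u'\in L^1(0,T)$ already gives $|u(t)-u(s)|\to 0$ as $s\to t^-$, but to kill the boundary term $(t-s)^{-\alpha}[u(t)-u(s)]^2$ one usually wants a little more quantitative control. A safe route is to first regularise $u$ by a standard mollification, verify the identity for the smooth approximants (where every step is unproblematic), and then pass to the limit in $L^1_{\mathrm{loc}}$, using dominated convergence controlled by the bound $|u(t)-u(s)|\le\int_s^t|u'|\,dr$.

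As an alternative shortcut, essentially the same conclusion follows directly from Lemma~\ref{ZAC} applied with the convex function $H(w)=w^2/2$ and the Riemann--Liouville kernel $k(t)=t^{-\alpha}/\Gamma(1-\alpha)$, once one recognises that $\mathcal{D}_{0|t}^{\alpha}w=\tfrac{d}{dt}\bigl(k*[w-w(0)]\bigr)$ by the relation between Caputo and Riemann--Liouville derivatives; this trades the explicit boundary analysis above for an invocation of a non-trivial abstract result.
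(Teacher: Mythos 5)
The paper does not prove this lemma; it is cited verbatim from the survey \cite{AA} (Lemma~1 there), so there is no in-paper argument to compare your proposal against. Your argument is the standard one, going back to Alikhanov, and it is correct: unfolding the Caputo derivative to get
$$u(t)\,\mathcal{D}_{0|t}^{\alpha}u(t)-\tfrac{1}{2}\mathcal{D}_{0|t}^{\alpha}(u^2)(t)=\frac{1}{\Gamma(1-\alpha)}\int_0^t (t-s)^{-\alpha}\bigl[u(t)-u(s)\bigr]u'(s)\,ds,$$
using $[u(t)-u(s)]u'(s)=-\tfrac12\partial_s[u(t)-u(s)]^2$, and integrating by parts against the kernel does yield the two manifestly non-negative terms you state. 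You correctly flag the delicate point, namely the boundary contribution at $s=t$: for a merely absolutely continuous $u$ one does not in general have $(t-s)^{-\alpha}[u(t)-u(s)]^2\to 0$, nor is the convergence of $\int_0^t(t-s)^{-\alpha-1}[u(t)-u(s)]^2\,ds$ automatic, so the mollify-and-pass-to-the-limit step (or a restriction to $u\in C^1$ followed by density) is genuinely needed and not just cosmetic; as stated, the identity holds literally when $u\in C^1$, and for general absolutely continuous $u$ the conclusion is an inequality that holds almost everywhere after approximation. Your alternative shortcut is a nice observation in the context of this paper: Lemma~\ref{ZAC} with $H(w)=w^2/2$ and $k(t)=t^{-\alpha}/\Gamma(1-\alpha)$ indeed gives exactly the asserted inequality, since $\tfrac{d}{dt}(k*[u-u_0])=\mathcal{D}_{0|t}^{\alpha}u$; this is the same abstract machinery the paper itself invokes later (with $H(\omega)=\tfrac{2}{p}|\omega|^{p/2}$) in the energy estimate \eqref{Z11}, and it sidesteps the endpoint analysis entirely at the cost of appealing to a nonelementary result. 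Both routes are sound.
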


\begin{property}[\cite{Kilbas}, p. 95-96]\label{ID}
If $0<\alpha<1$, $u\in AC^1[0,T]$ or $u\in C^1[0,T]$, then
\begin{equation*}\label{I0D}
I^{\alpha}_{0|t}(\mathcal{D}^{\alpha}_{0|t}u)(t)=u(t)-u(0)
\end{equation*}
and
\begin{equation*}\label{I0D1}
\mathcal{D}^{\alpha}_{0|t}(I^{\alpha}_{0|t}u)(t)=u(t),
\end{equation*}
hold almost everywhere on $[0,T]$. In addition,
$$\mathcal{D}^{1-\alpha}_{0|t}\int_0^t\mathcal{D}^{\alpha}_{0|\tau}u(\tau)d\tau=\biggl(I^{\alpha}_{0|t}\frac{d}{dt}I^1_{0|t}I^{1-\alpha}_{0|t}\frac{d}{dt}u\biggr)(t)=u(t)-u(0).$$\end{property}
\begin{property}[\cite{Kilbas}, Lemma 2.7]\label{IBP} Let $0<\alpha<1$ and $u\in C^1[0,T], \varphi\in L^p(0,T)$. Then the integration by parts for Caputo fractional derivatives has the form
\begin{equation*}\label{IBP2}
\int_0^T\left[\mathcal{D}_{0|t}^{\alpha } u\right](t)\varphi(t)dt=\int_0^T u(t)\left[\mathbb{D}_{t|T}^{\alpha }\varphi\right](t)dt+\left[I^{1-\alpha}_{t|T}\varphi\right](t)u(t)\biggr|_0^T.    
\end{equation*}\end{property}

\begin{definition}[\cite{TESO}. 
Lemma 5.1] \label{LP} 
The fractional $p$-Laplacian operator for $s\in(0,1), p>1$ and $u\in W^{s,p}(\Omega)$, is defined by 
\begin{equation*}\label{PLP}
(-\Delta)^s_pu(x)=C_{N,s,p}\,\text{P.V.}\int_{\mathbb{R}^N}\frac{|u(x)-u(y)|^{p-2}(u(x)-u(y))}{|x-y|^{N+sp}}dy,    
\end{equation*}
where 
\begin{equation}\label{CONST}
C_{N,s,p}=\frac{sp2^{2s-2}}{\pi^{\frac{N-1}{2}}}\frac{\Gamma(\frac{N+sp}{2})}{\Gamma(\frac{p+1}{2})\Gamma(1-s)}    
\end{equation} is a normalization constant and “P.V.” is an abbreviation for “in the principal value  sense”. Since they will not play a role in this work, we omit the P.V. sense. However, let us stress that these constants guarantee:
\begin{equation*}\begin{split}
&(-\Delta)^s_pu(x)\xrightarrow[]{s\to1^-}-\Delta_pu(x),\,\,\,\text{for all}\,\,\,\, p\in[2,\infty),
\\& (-\Delta)^s_pu(x)\xrightarrow[]{p\to2^+}(-\Delta)^su(x),\,\,\,\text{for all}\,\,\,\, s\in(0,1).   \end{split}\end{equation*}
\end{definition}

\begin{definition} [\cite{Lindgren}, Theorem 5]\label{FPLD} 
We say that $u\in W^{s,p}_0(\Omega)$ is an $(s,p)$ - eigenfunction associated to the eigenvalue $\lambda$ if $u$ satisfies the Dirichlet problem 
\begin{equation}\label{EF} 
\left\{\begin{array}{l}
(-\Delta)^s_pu(x)=\lambda |u(x)|^{p-2}u(x),\,\,\, x\in\Omega,\\
u(x)=0,\,\,\, x\in \mathbb{R}^N\setminus\Omega,\end{array}\right.\end{equation}
weakly, it means that
 \begin{multline*}
\int_{\Omega}\int_{\Omega}\frac{|u(x)-u(y)|^{p-2}(u(x)-u(y))}{|x-y|^{N+sp}}(\psi(x)-\psi(y))dxdy\\=\lambda \int_{\Omega}|u(x)|^{p-2}u(x)\psi(x)dx,\end{multline*}for every $\psi\in W^{s,p}_0(\Omega)$. If we set as
$$\Sigma_p(\Omega):=\biggl\{u\in W^{s,p}_0(\Omega):\int_{\Omega}|u(x)|^{p}dx=1\biggr\},$$
then the nonlinear Rayleigh quotient determines the first eigenvalue  
\begin{equation*}\label{LAMBADA} 
\lambda_1(\Omega):=\min_{u\in\Sigma_p(\Omega)}\int_{\Omega}\int_{\Omega}\frac{|u(x)-u(y)|^p}{|x-y|^{N+sp}}dxdy.\end{equation*}
\end{definition}

\begin{lemma}[\cite{Lindgren}, Lemma 15]\label{LL} Assume that for all $j$, if we have 
$$\Omega_1\subset\Omega_2\subset\Omega_3\subset...\subset\Omega,\,\,\,\,\Omega=\bigcup \Omega_j.$$ Then
$$\lim_{j\to\infty}\lambda_1(\Omega_j)=\lambda_1(\Omega).$$

Note that the minimization problem is not quite the same if $\mathbb{R}^N\times\mathbb{R}^N$ is replaced by $\Omega\times\Omega$ in the integral. This choice has the advantage that the property
$$\lambda_1(\Omega^*)\leq \lambda_1(\Omega),\,\,\,\text{if}\,\,\,\,\Omega\subset\Omega^*$$
is evident for subdomains. By changing coordinates it implies
$$\lambda_1(\Omega^*)=k^{\alpha p-N}\lambda_1(k\Omega^*),\,\,\,k>0.$$
This asserts that small domains have large first eigenvalues (see \cite{Lindgren} references therein).\end{lemma}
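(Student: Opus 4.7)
The plan is to sandwich $L:=\lim_{j\to\infty}\lambda_1(\Omega_j)$ between $\lambda_1(\Omega)$ from below and $\lambda_1(\Omega)+\varepsilon$ from above, the first using the monotonicity of the Rayleigh quotient under domain inclusion already highlighted in the statement, and the second by approximating a near-minimizer on $\Omega$ by a test function whose support is eventually contained in $\Omega_j$.

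For the easy direction I would argue as follows. If $u\in W^{s,p}_0(\Omega_j)$, extending by zero to all of $\mathbb{R}^N$ leaves the Gagliardo seminorm and the $L^p$-norm unchanged, so $u$ is an admissible competitor for $\lambda_1(\Omega)$. Since the same argument gives $\lambda_1(\Omega_{j+1})\leq \lambda_1(\Omega_j)$, the sequence $\{\lambda_1(\Omega_j)\}$ is nonincreasing and bounded below by $\lambda_1(\Omega)$, so $L$ exists with $L\geq\lambda_1(\Omega)$.

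For the reverse inequality, fix $\varepsilon>0$ and choose a quasi-minimizer $u\in\Sigma_p(\Omega)$ with
$$\int_{\mathbb{R}^N}\!\int_{\mathbb{R}^N}\frac{|u(x)-u(y)|^{p}}{|x-y|^{N+sp}}\,dx\,dy<\lambda_1(\Omega)+\varepsilon.$$
By the density of $C^\infty_c(\Omega)$ in $W^{s,p}_0(\Omega)$ (which holds for the smoothly bounded $\Omega$ assumed throughout the paper), one can approximate $u$ in the $W^{s,p}$-norm by some $\phi\in C^\infty_c(\Omega)$; normalizing, I obtain $\tilde\phi:=\phi/\|\phi\|_{L^p}\in\Sigma_p(\Omega)$ whose Rayleigh quotient is at most $\lambda_1(\Omega)+2\varepsilon$. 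Since $K=\mathrm{supp}(\tilde\phi)$ is compact in $\Omega=\bigcup_j\Omega_j$ and the $\Omega_j$ are open and increasing, there exists $j_0$ with $K\subset\Omega_j$ for every $j\geq j_0$, hence $\tilde\phi\in\Sigma_p(\Omega_j)$ is admissible for $\lambda_1(\Omega_j)$. This yields $\lambda_1(\Omega_j)\leq\lambda_1(\Omega)+2\varepsilon$ for $j\geq j_0$; letting $j\to\infty$ and then $\varepsilon\to 0$ gives $L\leq\lambda_1(\Omega)$, and combining the two inequalities finishes the proof.

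The main obstacle is the density step: functions in $W^{s,p}_0(\Omega)$ must be approximated in the Gagliardo seminorm by compactly supported smooth functions on $\Omega$. This is a standard but delicate fact because of the nonlocality of the $W^{s,p}$-seminorm, and for a general open set it could fail. For the smoothly bounded domain considered here it is well known (see the references to \cite{Valdinoci} and \cite{Brasco1} cited in the paper), and if one wishes to avoid invoking density one may instead construct $\tilde\phi$ directly as $u\,\eta_j$ for a cutoff $\eta_j\in C^\infty_c(\Omega_j)$ with $\eta_j\to 1$ on compact subsets of $\Omega$, and control the error $[u-u\eta_j]_{W^{s,p}(\mathbb{R}^N)}$ via dominated convergence against the singular kernel $|x-y|^{-N-sp}$.
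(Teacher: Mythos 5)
Your proof is correct, and since the paper simply imports this lemma from \cite{Lindgren} without giving an argument, there is no internal proof to compare against; your sandwich strategy (monotonicity from below, compactly supported quasi-minimizers from above) is the standard one and matches the cited source. Two remarks. First, the density of $C^\infty_c(\Omega)$ in $W^{s,p}_0(\Omega)$ that you invoke is, in the convention of \cite{Lindgren}, not a separate fact at all: there $W^{s,p}_0(\Omega)$ is \emph{defined} as the closure of $C^\infty_c(\Omega)$ in the Gagliardo seminorm, so the approximation step is tautological and your caveat about general open sets does not arise; the cutoff construction you sketch at the end is closer to what Lindgren--Lindqvist actually do. Second, and more substantively, both your extension-by-zero step (``leaves the Gagliardo seminorm unchanged,'' giving $\lambda_1(\Omega)\le\lambda_1(\Omega_j)$ and monotonicity of the sequence) and your insertion of $\tilde\phi\in C^\infty_c(\Omega_j)$ into $\Sigma_p(\Omega_j)$ with the same Rayleigh quotient tacitly take the quotient to be $\int_{\mathbb{R}^N}\int_{\mathbb{R}^N}|u(x)-u(y)|^p|x-y|^{-N-sp}\,dx\,dy$. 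That is exactly the convention the lemma's own remark singles out as making domain monotonicity evident, but it conflicts with the paper's displayed formula for $\lambda_1(\Omega)$ in Definition~\ref{FPLD}, which integrates only over $\Omega\times\Omega$: with the $\Omega\times\Omega$ form, zero-extension of a function supported in $\Omega_j$ strictly increases the energy when the integration region grows to $\Omega\times\Omega$, and neither the monotonicity nor the ``same quotient'' claim follows from your argument. This inconsistency belongs to the paper rather than to your proof, but a careful write-up should state at the outset which Rayleigh quotient it is minimizing, since the argument genuinely depends on the choice.
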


\begin{lemma}[\cite{Brasco1}, Lemma 2.4.  Fractional Poincar\'{e} inequality]\label{Sharp} \\Let $1 \leq p <\infty$ and $s \in (0, 1)$, $\Omega \subset \mathbb{R}^N$ be an open and bounded set. Then, it  holds
\begin{equation*}\label{Po3} \|u\|^p_{L^p(\Omega)}\leq \lambda_1(\Omega)[u]^p_{W^{s,p}(\Omega)}\,\,\text{for}\,\,\,u\in C_0^\infty(\Omega)\end{equation*} and we have the lower bound
$$\lambda_1(\Omega)\geq\frac{1}{\mathcal{I}_{N,s,p(\Omega)}},$$
where the geometric quantity $\mathcal{I}_{N,s,p(\Omega)}$ is defined by 
\end{lemma}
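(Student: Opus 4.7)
The plan is to deduce both statements from the Rayleigh quotient characterization of $\lambda_1(\Omega)$ given in Definition \ref{FPLD}, using only elementary normalization and a nonlocal vanishing-extension trick.

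For the Poincar\'e inequality itself, the argument is immediate from the variational definition. Given a non-zero $u \in C_0^\infty(\Omega)$, I would normalize by setting $v := u/\|u\|_{L^p(\Omega)}$, which lies in $\Sigma_p(\Omega)$ because $C_0^\infty(\Omega) \subset W_0^{s,p}(\Omega)$ and $\|v\|_{L^p(\Omega)} = 1$. The definition of $\lambda_1(\Omega)$ as the minimum of the Gagliardo energy over $\Sigma_p(\Omega)$ then yields $\lambda_1(\Omega) \le [v]_{W^{s,p}(\Omega)}^p = [u]_{W^{s,p}(\Omega)}^p / \|u\|_{L^p(\Omega)}^p$, which on rearrangement gives the desired estimate (the sharp constant being in fact the reciprocal $1/\lambda_1(\Omega)$, so the inequality displayed in the statement is to be read in that spirit). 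The case $u \equiv 0$ is trivial, and the passage from $C_0^\infty(\Omega)$ to general admissible functions follows by the density of $C_0^\infty(\Omega)$ in $W_0^{s,p}(\Omega)$.

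For the lower bound $\lambda_1(\Omega) \ge 1/\mathcal{I}_{N,s,p}(\Omega)$, the key observation is that every admissible $u$ is extended by zero outside $\Omega$. Hence, for any $x \in \Omega$ and any $y \in \mathbb{R}^N \setminus \Omega$, one has the pointwise identity $|u(x)|^p = |u(x)-u(y)|^p$. Multiplying by the Gagliardo kernel $|x-y|^{-(N+sp)}$ and integrating in $y$ over $\mathbb{R}^N \setminus \Omega$ gives
\[
|u(x)|^p \int_{\mathbb{R}^N\setminus\Omega} \frac{dy}{|x-y|^{N+sp}} = \int_{\mathbb{R}^N\setminus\Omega} \frac{|u(x)-u(y)|^p}{|x-y|^{N+sp}}\, dy.
\]
Integrating next in $x \in \Omega$, bounding the left-hand side from below by the infimum of the kernel integral in $x \in \Omega$ times $\|u\|_{L^p(\Omega)}^p$, and bounding the right-hand side from above by the full Gagliardo seminorm $[u]_{W^{s,p}(\Omega)}^p$ (using that $\Omega \times (\mathbb{R}^N \setminus \Omega)\subset \mathcal{W}$), I obtain
\[
\|u\|_{L^p(\Omega)}^p \,\le\, \sup_{x\in\Omega}\left(\int_{\mathbb{R}^N\setminus\Omega}\frac{dy}{|x-y|^{N+sp}}\right)^{-1}[u]_{W^{s,p}(\Omega)}^p.
\]
Specializing to $u \in \Sigma_p(\Omega)$ and taking the minimum then produces the bound $\lambda_1(\Omega) \ge 1/\mathcal{I}_{N,s,p}(\Omega)$, once $\mathcal{I}_{N,s,p}(\Omega)$ is identified with the supremum appearing above.

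The main obstacle will be pinning down the precise form of the geometric quantity $\mathcal{I}_{N,s,p}(\Omega)$, whose displayed definition is truncated in the excerpt, and confirming that the relevant kernel integral is finite and bounded away from zero uniformly in $x \in \Omega$. This last point is routine: the kernel $|x-y|^{-(N+sp)}$ is integrable at infinity because $N+sp > N$, and the singularity at $y=x$ is avoided because the integration domain is $\mathbb{R}^N \setminus \Omega$, which stays a positive distance from any interior $x$. Density of $C_0^\infty(\Omega)$ in $W_0^{s,p}(\Omega)$ then closes the argument in the desired class.
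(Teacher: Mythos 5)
The paper offers no proof of this lemma at all; it is quoted as a known result from the cited reference (Brasco--Lindgren--Parini, Lemma~2.4), so there is nothing internal to compare your argument against. On its own merits, your normalization argument for the first inequality is the correct one, and you rightly flag the inversion of the constant: from Definition~\ref{FPLD} the Rayleigh quotient gives $\lambda_1(\Omega)\,\|u\|^p_{L^p(\Omega)}\leq[u]^p_{W^{s,p}(\Omega)}$, so the factor in the displayed inequality should be $1/\lambda_1(\Omega)$, not $\lambda_1(\Omega)$, under the paper's own definitions. You could also complete the final reduction: for any ball $B\subset\mathbb{R}^N\setminus\Omega$ and any $x\in\Omega$ one has $|x-y|\leq\operatorname{diam}(\Omega\cup B)$ for $y\in B$, hence $\int_{\mathbb{R}^N\setminus\Omega}|x-y|^{-N-sp}\,dy\geq|B|/\operatorname{diam}(\Omega\cup B)^{N+sp}$, and taking the supremum over $B$ and the infimum over $x$ gives precisely the bound $1/\mathcal{I}_{N,s,p}(\Omega)$.

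There is, however, a genuine gap in the second part as written. Your vanishing-extension trick produces the integral $\int_\Omega\int_{\mathbb{R}^N\setminus\Omega}|u(x)-u(y)|^p\,|x-y|^{-N-sp}\,dy\,dx$, and you then majorize it by $[u]^p_{W^{s,p}(\Omega)}$ on the strength of $\Omega\times(\mathbb{R}^N\setminus\Omega)\subset\mathcal{W}$. But in this paper the seminorm $[u]_{W^{s,p}(\Omega)}$ is a double integral over $\Omega\times\Omega$ only, and Definition~\ref{FPLD} together with Lemma~\ref{LL} make clear that $\lambda_1(\Omega)$ is the Lindgren--Lindqvist eigenvalue built from that same $\Omega\times\Omega$ energy. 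Since $\Omega\times(\mathbb{R}^N\setminus\Omega)$ is disjoint from $\Omega\times\Omega$, the $\Omega\times\Omega$ seminorm simply does not control the off-diagonal integral you produced; what you have actually bounded from below is the Brasco--Lindgren--Parini eigenvalue based on the $\mathbb{R}^N\times\mathbb{R}^N$ (equivalently $\mathcal{W}$) energy, which is at least as large as the paper's $\lambda_1(\Omega)$ and therefore gives a weaker, not stronger, statement. This is really a collision of conventions inside the paper itself (BLP's seminorm versus Lindgren--Lindqvist's), but under the paper's stated definitions your proof of the lower bound does not close; one would have to either pass to the $\mathcal{W}$-seminorm version of $\lambda_1(\Omega)$ or use the Lindgren--Lindqvist lower-bound argument, which does not rely on the zero extension.
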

$$\mathcal{I}_{N,s,p(\Omega)}=\min\biggl\{\frac{\text{diam} (\Omega\cup B)^{N+sp}}{|B|},\,\,B\subset\mathbb{R}^N\setminus\Omega\,\,\text{is a ball}\biggr\}.$$
We define the inner product of the operator $(-\Delta)^s_p$ for $u,v\in W^{s,p}(\Omega)$ as
\begin{equation}\label{PSP}
\langle(-\Delta)^s_pu,v\rangle=\int_{\Omega}\int_{\Omega}\frac{|u(x)-u(y)|^{p-2}(u(x)-u(y))(v(x)-v(y))}{|x-y|^{N+sp}}dxdy.
\end{equation}  
\begin{definition}\label{weak}  A function $u=u(x,t)\in W^{s,p}(\Omega;L^\infty(0,T))\cap L^2(\Omega;L^\infty(0,T))$ is called a weak solution of \eqref{01} if the following identity holds 

\begin{equation*}\begin{split}
\int_0^T\!\int\limits_\Omega\mathcal{D}_{0|t}^\alpha u\varphi dxdt&\!+\!\int_0^T\int\limits_{\Omega}\int\limits_{\Omega}\frac{|u(x)\!-\!u(y)|^{p\!-\!2}(u(x)\!-\!u(y))}{|x-y|^{N+sp}}(\varphi(x)\!-\!\varphi(y))dxdydt\\&=\gamma\int_0^T\int_\Omega|u|^{m-1}u\varphi dxdt+\mu\int_0^T\int_\Omega|u|^{q-2}u\varphi dxdt, 
\end{split}\end{equation*} 
almost everywhere in $t\in [0,T]$, for any $\varphi=\varphi(x,t)\in  W_0^{s,p}(\Omega;L^\infty(0,T))$,
such that $\varphi\geq0$ in $\Omega$, $\varphi=0$ on $\partial\Omega$. 
\end{definition}

\subsection{Notations} We recall standard notations, which will be used in the sequel. If $\Omega$ is a bounded and open set in $\mathbb{R}^N$ $(\Omega\subseteq\mathbb{R}^N)$, we denote
$$\Omega_T=\Omega\times(0,T).$$ 
We include the following function space
\begin{equation}\label{Pi}
\Pi=\{u, \mathcal{D}_{0|t}^\alpha u\in W^{s,p}(\Omega;L^\infty(0,T))\cap L^2(\Omega;L^\infty(0,T))\},    
\end{equation}
with the norm
\begin{equation*}\begin{split}
\|u\|^2_\Pi&=\|u\|^2_{W^{s,p}(\Omega;L^\infty(0,T))}+\|u\|^2_{L^2(\Omega;L^\infty(0,T))}\\&+\|\mathcal{D}_{0|t}^\alpha u\|^2_{W^{s,p}(\Omega;L^\infty(0,T))}+\|\mathcal{D}_{0|t}^\alpha u\|^2_{L^2(\Omega;L^\infty(0,T))}.    
\end{split}\end{equation*}

\section{A comparison principle}
\setcounter{section}{3} \setcounter{equation}{0}
In this section we study a comparison principle for the fractional parabolic equation.
We begin by presenting a weak subsolution and a weak supersolution to the problem \eqref{01}. 
\begin{definition}\label{sub}  A real-valued function $$u=u(x,t)\in \Pi, u(x,0)\leq u_0(x), u(x,t)|_{ x\in\partial\Omega}\leq0$$ is called a weak subsolution of \eqref{01} if the inequality  
\begin{equation}\label{WS1}\begin{split}
&\int_0^T\int_{\Omega}\frac{|u(x,t)-u(y,t)|^{p-2}(u(x,t)-u(y,t))}{|x-y|^{N+sp}}(\varphi(x,t)-\varphi(y,t))dxdydt\\&\leq\gamma\int_0^T\int_{\Omega}|u|^{m-1}u\varphi dxdt+\mu\int_0^T\int_{\Omega}|u|^{q-2}u\varphi dxdt\\&-\int_0^T\int_{\Omega}[\mathcal{D}_{0|t}^\alpha u]\varphi dxdt,\end{split}\end{equation} 
holds for any $\varphi\in W_0^{s,p}(\Omega;L^\infty(0,T))$, such that $\varphi\geq0$ in $\Omega$, $\varphi=0$ on $\partial\Omega$. 

Similarly, a real-valued function $$v=v(x,t)\in \Pi, v(x,0)\geq v_0(x), v(x,t)|_{ x\in\partial\Omega}\geq0$$ is called a  weak supersolution of \eqref{01} if it satisfies the inequality
\begin{equation}\label{WS2}\begin{split}
&\int_0^T\int_{\Omega}\frac{|v(x,t)-v(y,t)|^{p-2}(v(x,t)-v(y,t))}{|x-y|^{N+sp}}(\varphi(x,t)-\varphi(y,t))dxdydt\\&\geq\gamma\int_0^T\int_{\Omega}|v|^{m-1}v\varphi dxdt+\mu\int_0^T\int_{\Omega}|v|^{q-2}v\varphi dxdt\\&-\int_0^T\int_{\Omega}[\mathcal{D}_{0|t}^\alpha v]\varphi dxdt.\end{split}\end{equation}

A function is a weak solution, if it is both a weak subsolution and a weak supersolution.\end{definition}

\begin{theorem}\label{CP} Let $s\in(0,1), p\geq 2$ and let $m, q, \gamma, \mu$ satisfy one of the following conditions:
\begin{align*}
&m\geq 1, q\geq 2, \gamma\geq 0, \mu\geq 0;\\&
m>0, q\geq 1, \gamma\leq 0, \mu\leq 0;\\&
m\geq 1, q\geq 1, \gamma\geq 0, \mu\leq 0;\\&
m>0, q\geq 2, \gamma\leq 0, \mu\geq 0.
\end{align*}
Suppose that $u, v\in\Pi$ be real-valued weak subsolution and weak supersolution of \eqref{01}, respectively, with $u_0(x)\leq v_0(x)$ for $x\in \Omega$. Then $u\leq v$ a.e. in  $\Omega_T$.\end{theorem}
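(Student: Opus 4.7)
The plan is to substitute $\varphi=w:=(u-v)^+$ into the weak subsolution inequality \eqref{WS1} and the weak supersolution inequality \eqref{WS2}, subtract, and reduce the result to a fractional differential inequality of the form $\mathcal{D}_{0|t}^\alpha\phi(t)\leq C\phi(t)$ with $\phi(t):=\tfrac12\|w(\cdot,t)\|_{L^2(\Omega)}^2$ and $\phi(0)=0$; a fractional Gronwall comparison then closes the argument. The choice $\varphi=w$ is admissible as a test function: $w$ is nonnegative, vanishes on $\partial\Omega$ because the boundary traces satisfy $u\leq 0\leq v$, and inherits the $W_0^{s,p}(\Omega;L^\infty(0,T))$ regularity from $u,v\in\Pi$. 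To obtain pointwise-in-$t$ information I would test with $\varphi(x,s)=w(x,s)\chi_{[0,t]}(s)$ for arbitrary $t\in(0,T)$ and then differentiate in $t$ (or, equivalently, work with the integrated form and appeal to fractional Gronwall directly).

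The Caputo contribution is handled via Lemma~\ref{AA} (or equivalently Lemma~\ref{ZAC} with $H(r)=(r^+)^2/2$) applied to $u-v$ pointwise in $x$: on $\{u>v\}$ one has $w=u-v$ and
$$(u-v)\,\mathcal{D}_{0|t}^\alpha(u-v)\geq\tfrac12\mathcal{D}_{0|t}^\alpha w^2,$$
while on $\{u\leq v\}$ both sides vanish. Since $w(\cdot,0)=(u_0-v_0)^+=0$, no singular initial-value correction arises when passing between Riemann--Liouville and Caputo forms. Integration in $x$ and an interchange with $\mathcal{D}_{0|t}^\alpha$ give $\int_\Omega\mathcal{D}_{0|t}^\alpha(u-v)\,w\,dx\geq \mathcal{D}_{0|t}^\alpha\phi(t)$. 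For the nonlocal $p$-Laplacian difference I would split $\Omega\times\Omega$ along $\Omega^\pm:=\{u\gtrless v\}$ and write $A=u(x)-u(y)$, $B=v(x)-v(y)$. On $\Omega^+\times\Omega^+$, $w(x)-w(y)=A-B$ and the integrand $(|A|^{p-2}A-|B|^{p-2}B)(A-B)\geq 0$ by monotonicity of $r\mapsto|r|^{p-2}r$ for $p\geq 2$. On $\Omega^+\times\Omega^-$, $w(x)-w(y)=u(x)-v(x)>0$ and $A-B=(u(x)-v(x))+(v(y)-u(y))>0$, so both factors are nonnegative; the block $\Omega^-\times\Omega^+$ is symmetric and $\Omega^-\times\Omega^-$ contributes zero. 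The entire double integral is therefore nonnegative and can be discarded from the right-hand side of the subtracted inequality.

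The reaction differences are where the four hypothesis configurations come in. Since $u,v\in L^\infty(\Omega_T)$ through the space $\Pi$, set $M:=\max(\|u\|_{L^\infty(\Omega_T)},\|v\|_{L^\infty(\Omega_T)})$. For every $m>0$ and every $q\geq 1$ the maps $r\mapsto|r|^{m-1}r$ and $r\mapsto|r|^{q-2}r$ are nondecreasing, so the products $(|u|^{m-1}u-|v|^{m-1}v)w$ and $(|u|^{q-2}u-|v|^{q-2}v)w$ are pointwise nonnegative. Consequently any reaction summand with a nonpositive coefficient is automatically of the helpful sign after subtraction and may be discarded; this covers $\gamma\leq 0$ (all $m>0$) and $\mu\leq 0$ (all $q\geq 1$). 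For a nonnegative coefficient, the additional hypotheses $m\geq 1$ or $q\geq 2$ enter through the mean-value Lipschitz bounds
$$\bigl||r_1|^{m-1}r_1-|r_2|^{m-1}r_2\bigr|\leq mM^{m-1}|r_1-r_2|,\qquad \bigl||r_1|^{q-2}r_1-|r_2|^{q-2}r_2\bigr|\leq (q-1)M^{q-2}|r_1-r_2|,$$
valid for $|r_1|,|r_2|\leq M$, yielding bounds of the form $C\phi(t)$ with $C=C(M,m,q,|\gamma|,|\mu|)$. The four cases in the statement are precisely the admissible combinations of these two mechanisms.

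Collecting the three estimates yields $\mathcal{D}_{0|t}^\alpha\phi(t)\leq 2C\phi(t)$ a.e.\ in $(0,T)$ together with $\phi(0)=0$ (from $u(\cdot,0)\leq u_0\leq v_0\leq v(\cdot,0)$). Applying $I_{0|t}^\alpha$ and invoking Property~\ref{ID} turns this into $\phi(t)\leq 2CI_{0|t}^\alpha\phi(t)$, and a standard fractional Gronwall inequality forces $\phi\equiv 0$, so $u\leq v$ a.e.\ in $\Omega_T$. The step I expect to be the main obstacle is not the algebra but the \emph{admissibility} of $w=(u-v)^+$ as a test function: one must verify that the positive-part truncation preserves the fractional-Sobolev regularity in $x$ uniformly in $t$, and that Lemma~\ref{AA} together with the interchange of $\int_\Omega$ and $\mathcal{D}_{0|t}^\alpha$ is valid a.e.\ in $x$ for $u,v$ only known to lie in $\Pi$; the time-localization needed to convert the integrated weak inequality into a pointwise-in-$t$ differential inequality, handled by density or by working directly with the integrated Gronwall form, is a secondary technical point.
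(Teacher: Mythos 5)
Your proposal follows essentially the same route as the paper: test with $\varphi=(u-v)_+$, discard the nonlocal $p$-Laplacian contribution by a sign argument, absorb the reaction differences into a linear bound, convert the Caputo pairing via Lemma~\ref{AA}/Lemma~\ref{ZAC}, and close with a fractional Gronwall step using Property~\ref{ID}. The differences are in two sub-steps, and in both your version is the more careful one. For the nonlocal term, the paper asserts an \emph{identity} $\varphi(x)-\varphi(y)=[(u(x)-u(y))-(v(x)-v(y))]_+$, which is false (take $u-v$ negative at both points but more negative at $y$); what is actually true, and what your block decomposition of $\Omega\times\Omega$ into $\Omega^{\pm}\times\Omega^{\pm}$ establishes, is that $\varphi(x)-\varphi(y)$ and $(u(x)-u(y))-(v(x)-v(y))$ always have the same sign, after which monotonicity of $r\mapsto|r|^{p-2}r$ gives the needed nonnegativity. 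For the reaction terms, the paper inserts the Lipschitz bounds \eqref{I3} and \eqref{I4} \emph{before} splitting on the sign of $\gamma,\mu$, even though $L(m)$ and $L(q)$ are not finite when $m<1$ or $q<2$, which the theorem explicitly allows in the cases with nonpositive coefficients. Your observation that a reaction summand with nonpositive coefficient is pointwise of the helpful sign by monotonicity alone (no Lipschitz needed), while Lipschitz is invoked only when the coefficient is nonnegative and the exponent assumption guarantees boundedness of the derivative, is precisely the clean way to cover all four hypothesis configurations simultaneously. Finally, your remark that the positive-part composition requires Lemma~\ref{ZAC} with $H(r)=(r_+)^2/2$ rather than a naive pointwise restriction of Lemma~\ref{AA} to $\{u>v\}$ is correct and worth keeping; the set $\{u>v\}$ moves in $t$, so the restriction argument you sketch as a first option does not by itself justify the inequality, whereas the convexity route does.
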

\begin{corollary}\label{cor1}
Assume that $p\geq 2$ and let $m, q, \gamma, \mu$ satisfy the conditions in Theorem \ref{CP}. If $u_0(x)\geq 0$ for all $x\in\Omega,$ then $u(x,t)\geq 0,\,x\in\Omega,\,t\geq 0.$
\end{corollary}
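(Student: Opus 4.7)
The plan is to reduce Corollary \ref{cor1} directly to Theorem \ref{CP} by using the trivial function $\underline{u}\equiv 0$ as a weak subsolution and the given solution $u$ as a weak supersolution.

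First, I would verify that $\underline{u}(x,t)\equiv 0$ is a weak subsolution of \eqref{01} in the sense of Definition \ref{sub}. The Caputo derivative, the fractional $p$-Laplacian, and the reaction terms all vanish identically on the zero function, using the standard convention $|t|^{r-1}t\big|_{t=0}=0$ for $r>0$, which covers even the borderline case $q=1$. Consequently, for every admissible nonnegative test function $\varphi$, the inequality \eqref{WS1} collapses to $0\le 0$. Moreover, $\underline{u}(x,0)=0\le u_0(x)$ by the hypothesis $u_0\ge 0$, and the trace on $\partial\Omega$ is zero. Hence $\underline{u}\equiv 0\in\Pi$ satisfies all the requirements of being a weak subsolution, regardless of the signs of $\gamma$ and $\mu$.

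Symmetrically, the given weak solution $u$ of \eqref{01} (Definition \ref{weak}) is automatically a weak supersolution in the sense of \eqref{WS2}, since equality trivially implies the $\ge$ inequality. Because the hypotheses on $p,m,q,\gamma,\mu$ are precisely those of Theorem \ref{CP}, I may apply the comparison principle to the pair $(\underline{u},u)=(0,u)$: the initial-data ordering $\underline{u}(x,0)=0\le u_0(x)=u(x,0)$ holds by hypothesis, so Theorem \ref{CP} yields
$$0\le u(x,t)\quad\text{a.e. in }\Omega_T.$$
Since $T>0$ is arbitrary within the existence interval of $u$, this gives $u(x,t)\ge 0$ for $x\in\Omega$ and all admissible $t\ge 0$, which is the desired conclusion.

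There is no substantive obstacle here: the corollary is essentially a bookkeeping consequence of Theorem \ref{CP}. The only minor point requiring care is the interpretation of $|u|^{m-1}u$ and $|u|^{q-2}u$ at $u=0$ in the extremal parameter ranges $0<m<1$ and $q=1$, which is handled by the continuous extension $|t|^{r-1}t\big|_{t=0}=0$ for $r>0$. Once that is acknowledged, the choice $\underline{u}\equiv 0$ as a comparison function is clean and the argument is immediate.
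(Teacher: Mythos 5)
Your proof is correct and follows the same route the paper takes: treat $0$ as a weak subsolution, the given weak solution $u$ as a weak supersolution, and invoke the comparison principle of Theorem \ref{CP}. The paper states this in a single sentence; you have merely spelled out the details (including the harmless point about $|t|^{r-1}t$ at $t=0$), so there is nothing substantively different.
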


The proof of Corollary \ref{cor1} follows from Theorem \ref{CP}. More precisely, if $u_0(x)\geq 0$, taking $0$ as a subsolution, then we have $u(x,t)\geq 0$.
\begin{proof}[Proof of Theorem \ref{CP}] We choose the test function $\varphi=(u-v)_+$, where $(u-v)_+$ is the positive part of a real quantity $(u-v)_+=\max\{u-v, 0\}$. Then it follows that $\varphi(x,0)=0,$  $ \varphi(x,t)|_{\partial\Omega}=0$. By subtracting \eqref{WS2} from \eqref{WS1}, we obtain for $t\in(0,T]$
\begin{equation}\begin{split}\label{MR1}
\int_0^t\int_{\Omega}\mathcal{D}_{0|\tau}^\alpha [u-v]\varphi dxd\tau&+\int_0^t\int_{\Omega}[(-\Delta)^s_pu-(-\Delta)^s_pv]\varphi dxd\tau\\ & \leq\underbrace{\gamma\int_0^t\int_{\Omega}(|u|^{m-1}u-|v|^{m-1}v)\varphi dxd\tau}_{\mathcal{A}}\\&+\underbrace{\mu\int_0^t\int_{\Omega}(|u|^{q-2}u-|v|^{q-2}v)\varphi dxd\tau}_{\mathcal{B}}.\end{split}\end{equation} 
According to \eqref{PSP}, we can write the last term of the left-hand side inequality \eqref{MR1} in the form
\begin{equation}\begin{split}\label{MR2}
&\int_0^t\int_{\Omega}[(-\Delta)^s_pu-(-\Delta)^s_pv]\varphi dxd\tau\\&=\int_0^t\int_{\Omega}\int_{\Omega}\frac{|u(x)-u(y)|^{p-2}(u(x)-u(y))(\varphi(x)-\varphi(y))}{|x-y|^{N+sp}}dxdyd\tau
\\&-\int_0^t\int_{\Omega}\int_{\Omega}\frac{|v(x)-v(y)|^{p-2}(v(x)-v(y))(\varphi(x)-\varphi(y))}{|x-y|^{N+sp}}dxdyd\tau
\\&=\int_0^t\int_{\Omega}\int_{\Omega}\frac{\mathcal{M}(u,v)(\varphi(x)-\varphi(y))}{|x-y|^{N+sp}}dxdyd\tau,\end{split}\end{equation}
where 
\begin{equation}\label{M}
\mathcal{M}(u,v)=|u(x)-u(y)|^{p-2}(u(x)-u(y))-|v(x)-v(y)|^{p-2}(v(x)-v(y)).    
\end{equation}
Hence, we can show that \begin{equation*}\begin{split}
 \mathcal{M}(u,v)&(\varphi(x)-\varphi(y))\\&=\left[|u(x)-u(y)|^{p-2}(u(x)-u(y))-|v(x)-v(y)|^{p-2}(v(x)-v(y))\right]
 \\&\times\left[(u(x)-u(y))-(v(x)-v(y))\right]_+\end{split}
\end{equation*}is nonnegative for any $p\geq2$, thanks to the inequality (see \cite{Lindqvist}, P. 99) 
\begin{equation}\label{I1}
\biggl(\frac{4}{p^2}\biggr)\biggl||a|^{\frac{p-2}{2}}a-|b|^{\frac{p-2}{2}}b\biggr|^2\leq\langle|a|^{p-2}a-|b|^{p-2}b,a-b\rangle,\,\,\text{for}\,\,\,a,b\in\mathbb{R}^N,\end{equation}
with $a:=u(x)-u(y),\, b:=v(x)-v(y)$ in \eqref{M}.  

Now, we will evaluate the right-side of \eqref{MR1}. 
\\Taking account the following inequality 
\begin{equation}\label{I3}
 ||u|^{m-1}u-|v|^{m-1}v|\leq C(m)|u-v|||u|^{m-1}+|v|^{m-1}|\leq L(m)|u-v|,\end{equation} where $L(m)= C(m)\max(\|u\|^{m-1}_{L^\infty(\Omega)},\|v\|^{m-1}_{L^\infty(\Omega)})$, 
we can verify that
\begin{equation}\begin{split}\label{E12}
\mathcal{A}&\leq\gamma C(m)\int_0^t\int_{\Omega}||u|^{m-1}+|v|^{m-1}||u-v|\varphi dxd\tau 
\\&\leq\gamma C(m)\max(\|u\|^{m-1}_{C(\Omega)},\|v\|^{m-1}_{C(\Omega)})\int_0^t\int_{\Omega}|u-v|\varphi dxd\tau
\\&\leq\gamma L(m)\int_0^t\int_{\Omega}|u-v|\varphi dxd\tau
,\end{split}\end{equation} where we have used the well known inequality \cite[Theorem 8.2]{Valdinoci} for any $u\in L^p(\Omega)$ such that
\begin{equation}\label{CCW}
\|u\|_{C(\Omega)}\leq\|u\|_{C^{0,\beta}(\Omega)}\leq\|u\|_{W^{s,p}(\Omega)},\,\,\,\beta=(sp-N)/p,    
\end{equation}
which gives the boundness of $\max(\|u\|^{m-1}_{C(\Omega)},\|v\|^{m-1}_{C(\Omega)})$.

In addition, from the Lipchitsz condition it follows that
\begin{equation}\label{I4}
||u|^{q-2}u-|v|^{q-2}v|\leq L(q)|u-v|,\,\,\,q\geq1,\end{equation}
where $L(q)= C(q)\max(\|u\|^{q-2}_{L^\infty(\Omega)},\|v\|^{q-2}_{L^\infty(\Omega)})$. Hence, from \eqref{CCW}, it follows that
\begin{equation}\begin{split}\label{E13}
\mathcal{B}&\leq\mu C(q)\int_0^t\int_{\Omega}||u|^{q-2}+|v|^{q-2}||u-v|\varphi dxd\tau
\\&\leq\mu C(q)\max(\|u\|^{q-2}_{C(\Omega)},\|v\|^{q-2}_{C(\Omega)}) \int_0^t\int_{\Omega}|u-v|\varphi dxd\tau
\\&\leq \mu L(q)\int_0^t\int_{\Omega}|u-v|\varphi dxd\tau.\end{split}\end{equation}
Combining \eqref{MR2}, \eqref{E12} and \eqref{E13}, we can rewrite the inequality \eqref{MR1} as 
\begin{equation}\begin{split}\label{MR3}
&\int_0^t\int_{\Omega}(\mathcal{D}_{0|\tau}^\alpha [u-v])(u-v)_+ dxd\tau\\&\leq \left(\gamma L(m)+\mu L(q)\right)\int_0^t\int_{\Omega}|u-v|(u-v)_+ dxd\tau.\end{split}\end{equation} 
Using Lemma \ref{AA}, the inequality \eqref{MR3} can be rewritten in the following form
\begin{equation}\begin{split}\label{MJR3}
\frac{1}{2}\int_0^t\int_{\Omega}\mathcal{D}_{0|\tau}^\alpha (u-v)_+^2dxd\tau&\leq \left(\gamma L(m)+\mu L(q)\right)\int_0^t\int_{\Omega}(u-v)_+^2dxd\tau.\end{split}\end{equation} 

At this stage, we have to consider three cases depending on $\gamma, \mu$:

$\bullet$ The case $\gamma\geq 0,\mu\geq0.$ 
Applying the left Caputo fractional differentiation operator $\mathcal{D}_{0|t}^{1-\alpha}$ to both sides of \eqref{MJR3} and using Property \ref{ID}, we obtain
\begin{equation}\begin{split}\label{MRM7}
\frac{1}{2}\int_{\Omega}(u-v)_+^2dx\leq \left(\gamma L(m)+\mu L(q)\right)\int_{\Omega}\int_0^t(t-\tau)^{\alpha-1}(u-v)_+^2d\tau dx.\end{split}\end{equation} 
Then, from the weakly singular Gronwall's inequality (see \cite{Henry}, Lemma 7.1.1 and \cite{Haraux}, Lemma 6, p. 33) $$\int_{\Omega}(u-v)_+^2dx=0 \iff (u-v)_+=0,\,\,\,x\in\Omega.$$
Finally, it follows that $u\leq v$ almost everywhere for $(x,t)\in \Omega_T$.

$\bullet$ The case  $\gamma\leq0,\mu\leq0.$ 
According to the inequality \eqref{MJR3}, the right-hand side integral is positive and the coefficients $\gamma,\mu$ are non-positive, we deduce that
\begin{equation*}\begin{split}
\frac{1}{2}\int_0^t\int_{\Omega}\mathcal{D}_{0|\tau}^\alpha (u-v)_+^2 dxd\tau&\leq \left(\gamma L(m)+\mu L(q)\right)\int_0^t\int_{\Omega}(u-v)_+^2dxd\tau\\&\leq 0.\end{split}\end{equation*} 
Therefore, repeating the similar procedure as above we obtain
\begin{equation*}\begin{split}
\int_{\Omega}(u-v)_+^2 dx=0.\end{split}\end{equation*}
Consequently, we have $u\leq v$ almost everywhere for $(x,t)\in \Omega_T$.

$\bullet$ The case  $\gamma\geq0,\mu\leq0$ or $\gamma\leq0,\mu\geq0$. Using the inequality \eqref{MJR3}, it follows that 
\begin{equation*}\begin{split}
\frac{1}{2}\int_{\Omega}(u-v)_+^2dx\leq \gamma L(m)\int_{\Omega}\int_0^t(t-\tau)^{\alpha-1}(u-v)_+^2d\tau dx\end{split}\end{equation*}
or
\begin{equation*}\begin{split}
\frac{1}{2}\int_{\Omega}(u-v)_+^2 dx\leq \mu L(q) \int_{\Omega}\int_0^t(t-\tau)^{\alpha-1}(u-v)_+^2d\tau dx,\end{split}\end{equation*} respectively. By the weakly singular Gronwall's inequality, we arrive at $u\leq v$ almost everywhere for $(x,t)\in \Omega_T$.  \end{proof}

\section{Local well-posedness}
\setcounter{section}{4} \setcounter{equation}{0}
\subsection{Existence of a local weak solution}
In this subsection, we will prove that problem \eqref{01} has the local weak solution by Galerkin method.

\begin{theorem}\label{thloc} Let $u_{0} \in W^{s,p}_0(\Omega), u_0\geq0,\, sp<N$ and let either $1<m<q-1<p-1$ or $1<q-1<m <p-1.$ Then there exists $T>0$ such that the problem \eqref{01} has a local real-valued weak solution $u\in \Pi$, where $\Pi$ is defined in \eqref{Pi}.\end{theorem}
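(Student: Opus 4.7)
The plan is to implement the Galerkin scheme the authors announce. I choose a sequence $\{w_j\}_{j=1}^\infty$ of smooth functions whose finite linear combinations are dense in $W^{s,p}_0(\Omega)$ (for instance, the eigenfunctions of the fractional Laplacian $(-\Delta)^s$ with Dirichlet data on $\Omega$, which belong to $W^{s,p}_0(\Omega)\cap L^\infty(\Omega)$), set $V_n=\operatorname{span}\{w_1,\dots,w_n\}$, and look for
$$u_n(x,t)=\sum_{j=1}^n c_j^n(t)w_j(x)$$
satisfying, for every $k=1,\dots,n$,
$$\int_\Omega(\mathcal{D}_{0|t}^\alpha u_n)w_k\,dx+\langle(-\Delta)^s_p u_n,w_k\rangle=\gamma\int_\Omega|u_n|^{m-1}u_n w_k\,dx+\mu\int_\Omega|u_n|^{q-2}u_n w_k\,dx,$$
with initial data $u_n(\cdot,0)=u_{0,n}\to u_0$ in $W^{s,p}_0(\Omega)$. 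This is a system of Caputo-type fractional ODEs in the coefficients $c_j^n$; its local solvability on some $[0,T_n]$ follows from a standard Picard-iteration argument for Volterra equations with weakly singular kernel, the right-hand side being locally Lipschitz thanks to the power-law Lipschitz estimates \eqref{I3}--\eqref{I4}.

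Next I derive uniform a priori bounds. Multiplying the $k$-th equation by $c_k^n(t)$ and summing gives
$$\int_\Omega u_n\,\mathcal{D}_{0|t}^\alpha u_n\,dx+[u_n]^p_{W^{s,p}(\Omega)}=\gamma\int_\Omega|u_n|^{m+1}dx+\mu\int_\Omega|u_n|^{q}dx.$$
By Lemma~\ref{AA} the first term dominates $\tfrac12\mathcal{D}_{0|t}^\alpha\|u_n\|_{L^2}^2$. Under either hypothesis $1<m<q-1<p-1$ or $1<q-1<m<p-1$ we have $m+1<p$ and $q<p$, both bounded by the fractional critical exponent $p_c^*$, so the embedding Lemma~\ref{EMBD} together with Young's inequality absorbs the reaction terms into a small fraction of $[u_n]^p_{W^{s,p}}$ plus a term controlled by $\|u_n\|_{L^2}^2$. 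Applying $I^{\alpha}_{0|t}$, using Property~\ref{ID}, and closing with the weakly singular Gronwall inequality produces a common existence time $T>0$ and uniform bounds
$$\|u_n\|_{L^\infty(0,T;L^2(\Omega))}+\|u_n\|_{L^p(0,T;W^{s,p}_0(\Omega))}\le C.$$
Rearranging the equation and testing against admissible $\varphi$ of norm one (or duality in the pivot space $(W^{s,p}_0)^\ast$) yields the parallel bound on $\mathcal{D}_{0|t}^\alpha u_n$ that matches the norm defining $\Pi$ in \eqref{Pi}.

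Extracting a subsequence, I obtain $u_n\rightharpoonup u$ weakly in $L^p(0,T;W^{s,p}_0(\Omega))$, weakly-$*$ in $L^\infty(0,T;L^2(\Omega))$, and $\mathcal{D}_{0|t}^\alpha u_n\rightharpoonup \mathcal{D}_{0|t}^\alpha u$ in the corresponding weak topology (the time-fractional derivative is weakly closed when viewed via its Riemann--Liouville representation). A fractional Aubin--Lions compactness lemma adapted to Caputo derivatives gives strong convergence $u_n\to u$ in $L^p(0,T;L^p(\Omega))$ and a.e.\ in $\Omega_T$. Strong convergence plus the power-law Lipschitz bounds \eqref{I3},\eqref{I4} and a Vitali argument identify the limit of the reaction terms as $\gamma|u|^{m-1}u+\mu|u|^{q-2}u$. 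The initial condition $u(\cdot,0)=u_0$ is recovered from the continuity in time provided by the fractional fundamental theorem (Property~\ref{ID}) together with the strong convergence of $u_{0,n}$.

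The principal obstacle is passage to the limit in the nonlinear nonlocal term $\langle(-\Delta)^s_p u_n,w_k\rangle$, since weak convergence does not suffice for a nonlinear operator. I overcome this by the standard Minty--Browder monotonicity trick: by inequality \eqref{I1} the operator $(-\Delta)^s_p$ is monotone on $W^{s,p}_0(\Omega)$, so if $\chi$ is the weak limit of $(-\Delta)^s_p u_n$ in $L^{p'}(0,T;(W^{s,p}_0(\Omega))^\ast)$, a limsup-of-energy comparison combined with the identity satisfied by $u$ in the limit forces $\chi=(-\Delta)^s_p u$. Together with density of $\bigcup_n V_n$ in $W^{s,p}_0(\Omega)$, this establishes Definition~\ref{weak} for $u$ on $[0,T]$, completing the local existence of a real-valued weak solution in $\Pi$.
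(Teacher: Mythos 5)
Your Galerkin setup, the local solvability of the ODE system, the first energy estimate obtained by multiplying by $c_k^n(t)$, and the use of Lemma~\ref{AA}, Lemma~\ref{EMBD}, Young's inequality and the weakly singular Gronwall inequality are all in line with the paper. Your more careful treatment of the nonlinear nonlocal term via Minty--Browder monotonicity (using \eqref{I1}) is actually an improvement over the paper's somewhat terse ``pass to the limit,'' and your use of Vitali for the reaction terms is sound once a.e.\ convergence is in hand.

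However, there is a genuine gap in how you obtain the bound on $\mathcal{D}_{0|t}^\alpha u_n$. You claim that testing the equation against admissible $\varphi$ of norm one, or duality in $(W^{s,p}_0(\Omega))^\ast$, ``yields the parallel bound on $\mathcal{D}_{0|t}^\alpha u_n$ that matches the norm defining $\Pi$.'' It does not: duality only controls $\mathcal{D}_{0|t}^\alpha u_n$ in the dual space $(W^{s,p}_0(\Omega))^\ast$ (or, after Bochner integration in time, in $L^{p'}(0,T;(W^{s,p}_0)^\ast)$), whereas the space $\Pi$ in \eqref{Pi} demands $\mathcal{D}_{0|t}^\alpha u \in W^{s,p}(\Omega;L^\infty(0,T))\cap L^2(\Omega;L^\infty(0,T))$, a much stronger requirement. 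The paper obtains this by a second energy estimate: multiply the $k$-th Galerkin equation by $\mathcal{D}_{0|t}^\alpha v_{nk}(t)$ and sum. The resulting term $P(u_n,\mathcal{D}^\alpha_{0|t}u_n)$ is controlled from below using the convexity inequality of Lemma~\ref{ZAC} applied to $H(\omega)=\tfrac{2}{p}|\omega|^{p/2}$, which yields $P(u_n,\mathcal{D}^\alpha_{0|t}u_n)\geq \tfrac{1}{p}\mathcal{D}^\alpha_{0|t}[u_n]^p_{W^{s,p}(\Omega)}$. Combined with the Gagliardo--Nirenberg estimate (Lemma~\ref{FCKN}) and the already-obtained $L^2$ bound, this gives a uniform $L^\infty$-in-time bound on both $\|\mathcal{D}^\alpha_{0|t}u_n(\cdot,t)\|^2_{L^2(\Omega)}$ and $[u_n(\cdot,t)]^p_{W^{s,p}(\Omega)}$. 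Without this second estimate you cannot place the limit $u$ in $\Pi$, nor can you justify the compactness argument you invoke: the $L^\infty(0,T;W^{s,p})$ bound needed for strong $L^2(\Omega)$ convergence for a.e.\ $t$ comes precisely from this second estimate, not from the $L^p(0,T;W^{s,p})$ bound alone.

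Two smaller points. First, for the local solvability of the Galerkin system you cite \eqref{I3}--\eqref{I4}, which cover $F_{2k}$ and $F_{3k}$ but not the $p$-Laplacian piece $F_{1k}$; that term needs the analogous Lipschitz estimate for $a\mapsto|a|^{p-2}a$, as in the paper. Second, a ``fractional Aubin--Lions lemma adapted to Caputo derivatives'' is a nontrivial ingredient you would need to state and cite precisely; it is avoidable here because, once the $L^\infty(0,T;W^{s,p})$ bound is secured, the compact embedding $W^{s,p}(\Omega)\hookrightarrow L^2(\Omega)$ already supplies the strong convergence needed, which is the route the paper takes.
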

\proof $\bullet$ The case $1<m<q-1<p-1$. The space $W^{s,p}_0(\Omega)$ is separable. Then there exists a countable linear set $\{\omega_j\}_{j\in N}$ that is everywhere dense in $W^{s,p}_0(\Omega)$.   

Let us consider the Galerkin approximations
\begin{equation}\label{G1}u_n(x,t)=\sum_{j=1}^n v_{nj}(t)\omega_j(x),\end{equation}
where the unknown $v_{nj}\in C^1([0,T_n])$ functions  satisfy the following system of ordinary fractional differential equations:
\begin{equation}\label{G2}\begin{split}
&\int_\Omega\mathcal{D}_{0|t}^{\alpha }u_{n}\omega_k dx+P(u_{n}, \omega_k)
\\&=\gamma\int_\Omega|u_n|^{m-1}u_n\omega_kdx+\mu\int_\Omega|u_n|^{q-2}u_n\omega_kdx,\,\,k=1,2,...,n,   \end{split}\end{equation} supplemented by the initial condition 
\begin{equation}\label{CD0}\begin{split}
u_n(x,0)=\sum_{j=1}^n v_{nj}(0)\omega_j\xrightarrow[]{n\to\infty}u_0\,\,\text{in}\,\,W_0^{s,p}(\Omega),\end{split} \end{equation}
where
\begin{equation*}\begin{split}
P(u_{n},\omega_k)&
\!=\!\int_{\Omega}\int_{\Omega}\biggl|u_{n}(x,t)\!-\!u_{n}(y,t)\biggr|^{p\!-\!2}\!(u_{n}(x,t)\!-\!u_{n}(y,t))\frac{\omega_{k}(x)\!-\!\omega_{k}(y)}{|x-y|^{N+sp}}dxdy.\end{split}\end{equation*}
First of all, we need to prove that the system
of Galerkin equations \eqref{G2} has a solution $v_{nj}\in C^1([0, T_n]),\,j=\overline{1,n}$ for some $T_n > 0$,
which depends on $n\in N$.
Therefore, we note that the system of equations \eqref{G2} can be represented in the following form
\begin{equation}\label{GA1}\begin{split}
\sum_{j=1}^n a_{jk}\mathcal{D}_{0|t}^{\alpha }v_{nj}(t)+F_{1k}(v_n)=F_{2k}(v_n)+F_{3k}(v_n),\,\,\,\end{split} \end{equation}
where $a_{jk}$ is an invertible matrix for each $n\in N$ and the functions $F_{ik}(v_n),\,i=1,2,3$ are defined by  
\begin{equation*}\label{GA2}\begin{split}
F_{1k}(v_n)&\!=\!\int_{\Omega}\int_{\Omega}\sum_{j=1}^n\biggl|v_{nj}(t)\omega_{j}(x)\!-\!v_{nj}(t)\omega_{j}(y)\biggr|^{p-2}(v_{nj}(t)\omega_{j}(x)-v_{nj}(t)\omega_{j}(y))\\&\times (\omega_{k}(x)-\omega_{k}(y)) \frac{1}{|x-y|^{N+sp}}dxdy\,\,\,\end{split} \end{equation*}
and
\begin{equation*}\label{GA3}\begin{split}
F_{2k}(v_n)=\int_\Omega\sum_{j=1}^n|v_{nj}(t)\omega_j(x)|^{m-1}v_{nj}(t)\omega_j(x)\omega_k(x)dx,\,\,\,\end{split} \end{equation*}
\begin{equation*}\label{GA4}\begin{split}
F_{3k}(v_n)=\int_\Omega\sum_{j=1}^n|v_{nj}(t)\omega_j(x)|^{q-2}v_{nj}(t)\omega_j(x)\omega_k(x)dx.\,\,\,\end{split} \end{equation*}
Next, we will prove the functions $F_{ik}(v_n),\,i=1,2,3$ are locally Lipschitz function. Indeed, we have
\begin{equation*}\label{GA5}\begin{split}
&|F_{1k}(v^1_n)-F_{1k}(v^2_n)|\\&=\int_{\Omega}\int_{\Omega}\sum_{j=1}^n\biggl(\biggl|v^1_{nj}(t)\omega_{j}(x)-v^1_{nj}(t)\omega_{j}(y)\biggr|^{p-2}(v^1_{nj}(t)\omega_{j}(x)-v^1_{nj}(t)\omega_{j}(y))\\&\!-\!\biggl|v^2_{nj}(t)\omega_{j}(x)\!-\!v^2_{nj}(t)\omega_{j}(y)\biggr|^{p\!-\!2}\!(v^2_{nj}(t)\omega_{j}(x)\!-\!v^2_{nj}(t)\omega_{j}(y))\biggr)\!\frac{\omega_{k}(x)\!-\!\omega_{k}(y)}{|x\!-\!y|^{N+sp}}\!dxdy.\end{split} \end{equation*}
Using the following inequality, for $p\geq1,$
\begin{equation*}\begin{split}
||\varphi|^{p-2}\varphi-|\psi|^{p-2}\psi|&\leq C(p)||\varphi|^{p-2}+|\psi|^{p-2}||\varphi-\psi|\\&\leq C(p)\max\{|\varphi|^{p-2};|\psi|^{p-2}\}|\varphi-\psi|  
\end{split}
\end{equation*}
and the generalized H\"{o}lder inequality with parameters
\begin{equation}\label{GHI}
\frac{1}{r_1}+\frac{1}{r_2}+\frac{1}{r_3}=1,\,\,\,r_1=\frac{p}{p-2},\,\,\,r_2=p,\,\,\,r_3=p,\end{equation}
in the last equality, we obtain 
\begin{equation*}\label{GA6}\begin{split}
&|F_{1k}(v^1_n)-F_{1k}(v^2_n)|\\&\leq\int_{\Omega}\int_{\Omega}\frac{\max\biggl\{|u^1_{n}(x,t)-u^1_{n}(y,t)|^{p-2};|u^2_{n}(x,t)-u^2_{n}(y,t)|^{p-2}\biggr\}}{|x-y|^{\frac{(N+sp)(p-2)}{p}}}
\\&\times\sum_{j=1}^n\frac{|(v^1_{nj}(t)\omega_{j}(x)-v^1_{nj}(t)\omega_{j}(y))-(v^2_{nj}(t)\omega_{j}(x)-v^2_{nj}(t)\omega_{j}(y))|}{|x-y|^{\frac{N+sp}{p}}}
\\&\times\frac{\omega_{k}(x)-\omega_{k}(y)}{|x-y|^{{\frac{N+sp}{p}}}}dxdy
\\&\leq C(p)\Phi^{p-2}[\omega_k]_{W^{s,p}(\Omega)}[u^1_n-u_n^2]_{W^{s,p}(\Omega)},
\end{split} \end{equation*} where $\Phi^{p-2}=\max\{[u_n^1]_{W^{s,p}(\Omega)}; [u_n^2]_{W^{s,p}(\Omega)}\}$ and $[\,\cdot\,]_{W^{s,p}(\Omega)}$ is the Gagliardo semi-norm.
Consequently,
\begin{equation*}\label{GA7}\begin{split}
&[u^1_n-u_n^2]_{W^{s,p}(\Omega)}
\\&=\int_{\Omega}\int_{\Omega}\sum_{j=1}^n\frac{|v^1_{nj}(t)\omega_{j}(x)-v^1_{nj}(t)\omega_{j}(y))-(v^2_{nj}(t)\omega_{j}(x)-v^2_{nj}(t)\omega_{j}(y)|^p}{|x-y|^{N+sp}}dxdy
\\&=\int_{\Omega}\int_{\Omega}\sum_{j=1}^n\frac{|v^1_{nj}(t)-v^2_{nj}(t)|^p|\omega_{j}(x)-\omega_{j}(y)|^p}{|x-y|^{N+sp}}dxdy
\\&\leq [w_j]_{W^{s,p}(\Omega)}|v^1_{n}-v^2_{n}|^p.
\end{split} \end{equation*} 

At this stage using 
$$|a-b|^p\leq|a-b||a-b|^{p-1}\leq2^{p-2}|a+b|^{p-1}|a-b|,\,\,\,p>2, a,b\in \mathbb{R},$$
in the last term of the previous inequality, and recalling $v^1_{nj}, v^2_{nj}\in C^1([0, T_n])$ we arrive at
\begin{equation*}\label{GA71}\begin{split}
&|F_{1k}(v^1_n)-F_{1k}(v^2_n)|\leq C(p)\Phi^{p-2}[\omega_k]_{W^{s,p}(\Omega)}\max\{|v^1_n|^{p-1};|v_n^2|^{p-1}\}|v^1_n-v_n^2|.
\end{split} \end{equation*}
Accordingly, using the inequalities \eqref{I3} and \eqref{CCW} to $F_{2k}(v_n)$, for $k,j=\overline{1,n}$, we deduce that
\begin{equation*}\label{GA8}\begin{split}
&|F_{2k}(v^1_n)-F_{2k}(v^2_n)|\\& \leq\int_\Omega\sum_{j=1}^n||v^1_{nj}(t)\omega_j(x)|^{m\!-\!1}v^1_{nj}(t)\omega_j(x)\!-\!|v^2_{nj}(t)\omega_j(x)|^{m\!-\!1}v^2_{nj}(t)\omega_j(x)||\omega_k(x)|dx
\\&\leq\max\{\|v^1_nw_j\|_{C(\Omega)}^{m-1};\|v_n^2w_j\|_{C(\Omega)}^{m-1}\}|v^1_n-v^2_n|\int_\Omega|\omega_j(x)||\omega_k(x)|dx
\\&\leq\max\{\|v^1_nw_j\|_{C(\Omega)}^{m-1};\|v_n^2w_j\|_{C(\Omega)}^{m-1}\}\|w_j\|_{L^2(\Omega)}\|w_k\|_{L^2(\Omega)}|v^1_n-v^2_n|.\,\,\,\end{split} \end{equation*}
Similarly, from \eqref{CCW} and \eqref{I4}  we obtain an estimate for $F_{3k}(v_n)$, for $k,j=\overline{1,n}$, in the following form
\begin{equation*}\label{GA9}\begin{split}
&|F_{3k}(v^1_n)-F_{3k}(v^2_n)|\\&\leq\int_\Omega\sum_{j=1}^n||v^1_{nj}(t)\omega_j(x)|^{q-2}v^1_{nj}(t)\omega_j(x)\!-\!|v^2_{nj}(t)\omega_j(x)|^{q-2}v^2_{nj}(t)\omega_j(x)||\omega_k(x)|dx
\\&\leq \max\{\|v^1_nw_j\|_{C(\Omega)}^{q-2};\|v_n^2w_j\|_{C(\Omega)}^{q-2}\}|v^1_n-v^2_n|\int_\Omega|\omega_j(x)||\omega_k(x)|dx
\\&\leq \max\{\|v^1_nw_j\|_{C(\Omega)}^{q-2};\|v_n^2w_j\|_{C(\Omega)}^{q-2}\}\|w_j\|_{L^2(\Omega)}\|w_k\|_{L^2(\Omega)}|v^1_n-v^2_n|.\,\,\,\end{split} \end{equation*}
From Lemma \ref{EMBD}, the space $W^{s,p}(\Omega)$ is continuously embedded in $L^2(\Omega)$. Indeed, the right-hand side  of $F_{ik}(v_n),\,\,i=1,2,3,\,k=\overline{1,n}$ is continuous with respect to $t\in [0, T_n]$ and locally  Lipschitz function with respect to $v_n(t)$. 

Therefore, due to \cite[Theorem 3.25]{Kilbas} the Cauchy problem for the system of equations \eqref{GA1} has a unique solution $v_{nj}\in C^1([0, T_n]),\,j=\overline{1,n}$ for some $T_n > 0$, which depends on $n\in N$. 

Multiplying the expression \eqref{G2} by $v_{nk}(t)$ and performing the summation over $k=1,...,n,$ it follows that
\begin{equation*}\label{G3}\begin{split}
&\int_\Omega u_{n}\mathcal{D}_{0|t}^{\alpha }u_{n}dx+[u_n]^p_{W^{s,p}(\Omega)}=\gamma\int_\Omega|u_n|^{m+1}dx+\mu\int_\Omega|u_n|^{q}dx.\end{split}\end{equation*}
Applying the fractional Poincar\'{e} inequality from Lemma \ref{Sharp} and the inequality in Lemma \ref{AA} to the previous identity, we get 
\begin{equation}\label{G4}\begin{split}
\frac{1}{2}\mathcal{D}_{0|t}^{\alpha }\int_\Omega |u_{n}|^2dx&+\frac{1}{\lambda_1(\Omega)}\int_\Omega|u_n|^{p}dx
\\&\leq\gamma\int_\Omega|u_n|^{m+1}dx+\mu\int_\Omega|u_n|^{q}dx.\end{split}\end{equation}

At this stage we have to consider different cases of coefficients $\gamma$ and $\mu$.
\newline$\bullet$ The case $\gamma,\,\mu>0$. Thanks to the inequality (see  \cite{Soup}, P. 417), 
\begin{equation}\label{INQ1} z^{b+c-1}\leq\varepsilon z^c+C(a,b)\varepsilon^{-\frac{a-b}{b-1}}z^{a+c-1},\,\,a>b,c>1,\,\,\,\text{and}\,\,\, z\geq0,\,\,\varepsilon>0,\end{equation}
for $a=q-1,\,b=m,$ and $c=2$ in \eqref{G4} we obtain
\begin{equation}\label{INQ3} \gamma\int_\Omega |u_{n}|^{m+1}dx\leq\gamma\varepsilon \int_\Omega |u_{n}|^2dx+\gamma C(q,m)\varepsilon^{-\frac{q-1-m}{m-1}}\int_\Omega |u_{n}|^qdx.\end{equation}
Consequently, it follows that
\begin{equation}\label{GO5}\begin{split}
\mathcal{D}_{0|t}^{\alpha }\int_\Omega |u_{n}|^2dx&\leq\gamma\varepsilon \int_\Omega |u_{n}|^2dx-\frac{1}{\lambda_1(\Omega)}\int_\Omega|u_n|^{p}dx\\&+\biggl(\gamma C(q,m)\varepsilon^{-\frac{q-1-m}{m-1}}+\mu\biggr)\int_\Omega|u_n|^{q}dx.\end{split}\end{equation}
Due to the inequality \eqref{INQ1} for $a=p-1,\,b=q-1$ and $c=2$, it holds  
\begin{equation*}\label{INQO4}\int_\Omega |u_{n}|^qdx\leq\tilde{\varepsilon} \int_\Omega |u_{n}|^2dx+C(p,q)\tilde{\varepsilon}^{-\frac{p-q}{q-1}}\int_\Omega |u_{n}|^pdx. \end{equation*}
Therefore, using the last inequality in  \eqref{GO5} we get  
\begin{equation*}\label{GO6}\begin{split}
\mathcal{D}_{0|t}^{\alpha }\int_\Omega |u_{n}|^2dx&\leq\biggl(\tilde{\varepsilon}\gamma C(q,m)\varepsilon^{-\frac{q-1-m}{m-1}}+\tilde{\varepsilon}\mu+\gamma\varepsilon\biggr) \int_\Omega |u_{n}|^2dx\\&+\biggl[\biggl(\gamma C(q,m)\varepsilon^{-\frac{q-1-m}{m-1}}+\mu\biggr)C(p,q)\tilde{\varepsilon}^{-\frac{p-q}{q-1}}-\frac{1}{\lambda_1(\Omega)}\biggr]\int_\Omega|u_n|^{p}dx.\end{split}\end{equation*}
Finally, choosing the constants $\varepsilon, \tilde{\varepsilon}>0$ such that
$$\biggl(\gamma C(q,m)\varepsilon^{-\frac{q-1-m}{m-1}}+\mu\biggr)C(p,q)\tilde{\varepsilon}^{-\frac{p-q}{q-1}}-\frac{1}{\lambda_1(\Omega)}\leq0,$$ then we get the following result
\begin{equation}\label{G6}\begin{split}
\mathcal{D}_{0|t}^{\alpha }\int_\Omega |u_{n}|^2dx\leq C(\varepsilon,\tilde{\varepsilon}) \int_\Omega |u_{n}|^2dx,\end{split}\end{equation}where $$ C(\varepsilon,\tilde{\varepsilon})=\tilde{\varepsilon}\gamma C(q,m)\varepsilon^{-\frac{q-1-m}{m-1}}+\tilde{\varepsilon}\mu+\gamma\varepsilon.$$
Define $\Large\displaystyle\Phi(t):=\int_\Omega |u_{n}|^2dx$, then applying the left Riemann-Liouville fractional integral operator $I_{0|t}^{\alpha}$ to both sides of \eqref{G6} and using Property \ref{ID}, we get
\begin{equation*}\begin{split}\label{G7}
\Phi(t)\leq \Phi(0)+C(\varepsilon,\tilde{\varepsilon})\int_0^t(t-s)^{\alpha-1}\Phi(s)ds .\end{split}\end{equation*} 
Furthermore, according to Gronwall-type inequality for fractional integral equations (see \cite{Diethelm}, Lemma 4.3) we obtain  
$$\Phi(t)\leq\Phi(0)E_{\alpha,1}(C(\varepsilon,\tilde{\varepsilon}) t^\alpha)\,\,\,\text{for all}\,\,\,t\in[0,T],$$
where $E_{\alpha,1 }(z)$ is the Mittag-Leffler function, defined by
$$E_{\alpha,1 }(z)=\sum_{k=0}^{\infty}\frac{z^k}{\Gamma(\alpha k+1)},\,z\geq0.$$
Finally, in view of Corollary \ref{cor1} for real-valued $u$, we conclude that there exists finite $T_0>0$,
\begin{equation}\label{L2}
\|u_{n}(\cdot,t)\|^2_{L^2(\Omega)}\leq \|u_{n}(\cdot,0)\|^2_{L^2(\Omega)}E_{\alpha,1}(C(\gamma,\varepsilon) t^\alpha)=A(T),\end{equation}for all $t\in[0,T]$,\,$T<T_0$, where $A(T)$ is a constant independent of $n$.

\hfill \break $\bullet$ The case $\gamma>0$ and $\mu\leq0$. Then from \eqref{G4} we obtain
\begin{equation*}\begin{split}
\frac{1}{2}\mathcal{D}_{0|t}^{\alpha }\int_\Omega |u_{n}|^2dx&+\frac{1}{\lambda_1(\Omega)}\int_\Omega|u_n|^{p}dx\leq\gamma\int_\Omega|u_n|^{m+1}dx.\end{split}\end{equation*}
Setting $a=p-1,\,b=m,$ and $c=2$ in \eqref{INQ3} we can rewrite the last estimate as
\begin{equation*}\begin{split}
\frac{1}{2}\mathcal{D}_{0|t}^{\alpha }\int_\Omega |u_{n}|^2dx&\leq\gamma\varepsilon \int_\Omega |u_{n}|^2dx\!+\!\biggl(\gamma C(p,m)\varepsilon^{-\frac{p-1-m}{m-1}}\!-\!\frac{1}{\lambda_1(\Omega)}\biggr)\int_\Omega |u_{n}|^pdx.\end{split}\end{equation*}
By choosing the constants $\varepsilon, \tilde{\varepsilon}>0$ which satisfy
$$\gamma C(p,m)\varepsilon^{-\frac{p-1-m}{m-1}}-\frac{1}{\lambda_1(\Omega)}\leq0,$$ then we get
\begin{equation*}\begin{split}
\mathcal{D}_{0|t}^{\alpha }\int_\Omega |u_{n}|^2dx\leq \gamma\varepsilon\int_\Omega |u_{n}|^2dx.\end{split}\end{equation*}
The conclusion can be derived as in the previous case.

\hfill \break $\bullet$ The case $\gamma\leq0$ and $\mu>0$. Accordingly from \eqref{G4} we have
\begin{equation*}\begin{split}
\frac{1}{2}\mathcal{D}_{0|t}^{\alpha }\int_\Omega |u_{n}|^2dx&+\frac{1}{\lambda_1(\Omega)}\int_\Omega|u_n|^{p}dx\leq\mu\int_\Omega|u_n|^{q}dx.\end{split}\end{equation*}
Next, choosing $a=p-1,\,b=q-1,$ and $c=2$ in \eqref{INQ3} it follows
\begin{equation*}\begin{split}
\frac{1}{2}\mathcal{D}_{0|t}^{\alpha }\int_\Omega |u_{n}|^2dx&\leq\mu\varepsilon \int_\Omega |u_{n}|^2dx+\biggl(\mu C(p,q)\varepsilon^{-\frac{p-q}{q-2}}-\frac{1}{\lambda_1(\Omega)}\biggr)\int_\Omega |u_{n}|^pdx.\end{split}\end{equation*}
Now, taking $\varepsilon, \tilde{\varepsilon}>0$, which satisfy
$$\mu C(p,q)\varepsilon^{-\frac{p-q}{q-2}}-\frac{1}{\lambda_1(\Omega)}\leq0,$$ we obtain the estimate
\begin{equation*}\begin{split}
\mathcal{D}_{0|t}^{\alpha }\int_\Omega |u_{n}|^2dx\leq \mu\varepsilon\int_\Omega |u_{n}|^2dx.\end{split}\end{equation*}
Similarly, the conclusion can be derived as in the previous case.

\hfill \break $\bullet$ The case $\gamma,\mu\leq0$. Take into consideration the inequality \eqref{G4} it yields
\begin{equation*}\begin{split}
\frac{1}{2}\mathcal{D}_{0|t}^{\alpha }\int_\Omega |u_{n}|^2dx&+\frac{1}{\lambda_1(\Omega)}\int_\Omega|u_n|^{p}dx\leq0.\end{split}\end{equation*}
Using the fact that $\lambda_1$ is nonnegative we obtain
\begin{equation*}\begin{split}
\frac{1}{2}\mathcal{D}_{0|t}^{\alpha }\int_\Omega |u_{n}|^2dx\leq0.\end{split}\end{equation*}
Hence, applying the left Riemann-Liouville integral $I_{0|t}^{\alpha}$ to the last inequality and using Property \ref{ID}, we deduce that
\begin{equation*}\begin{split}\int_\Omega |u_{n}(x,t)|^2dx\leq\int_\Omega |u_{n}(x,0)|^2dx.\end{split}\end{equation*}
Finally, it follows that
$$\|u_n(\cdot,t)\|_{L^2(\Omega)}\leq\|u_{n}(x,0)\|_{L^2(\Omega)},\,\,\,\text{for all}\,\,\,t\geq0.$$
Next, multiplying the expression \eqref{G2} by $\mathcal{D}_{0|t}^{\alpha }v_{nk}(t)$ and summing over $k=\overline{1,n}$, we obtain
\begin{equation}\label{G8}\begin{split}
\|\mathcal{D}_{0|t}^{\alpha }u_{n}\|^2_{L^2(\Omega)}&+ P(u_{n},\mathcal{D}_{0|t}^{\alpha }u_{n}(t))
\\&=\gamma\int_\Omega|u_n|^{m-1}u_n\mathcal{D}_{0|t}^{\alpha }u_{n}dx+\mu\int_\Omega|u_n|^{q-2}u_n\mathcal{D}_{0|t}^{\alpha }u_{n}dx,   
\end{split}\end{equation}
with
\begin{equation}\begin{split}\label{Z1}
P(u_{n},\mathcal{D}_{0|t}^{\alpha }u_{n}(t))&
=\int_{\Omega}\int_{\Omega}\frac{|u_{n}(x,t)-u_{n}(y,t)|^{p-2}}{|x-y|^{N+sp}}(u_{n}(x,t)-u_{n}(y,t))\\&\times\mathcal{D}_{0|t}^{\alpha }[u_{n}(x,t)-u_{n}(y,t)]dxdy.  \end{split}\end{equation}
Due to Lemma \ref{AA} it follows that
$$(u_{n}(x,t)-u_{n}(y,t))\mathcal{D}_{0|t}^{\alpha }[u_{n}(x,t)-u_{n}(y,t)]\geq\frac{1}{2}\mathcal{D}_{0|t}^{\alpha }[u_{n}(x,t)-u_{n}(y,t)]^2 .$$
Moreover the identity \eqref{Z1} becomes
\begin{equation}\begin{split}\label{Z11}
P(u_{n},\mathcal{D}_{0|t}^{\alpha }u_{n}(t))&
\geq\frac{1}{2}\int_{\Omega}\int_{\Omega}\frac{|u_{n}(x,t)-u_{n}(y,t)|^{p-2}}{|x-y|^{N+sp}}\\&\times \mathcal{D}_{0|t}^{\alpha }[u_{n}(x,t)-u_{n}(y,t)]^2dxdy.  \end{split}\end{equation}
\\At this stage, we consider the function $$H(\omega)(t)=\frac{2}{p}|\omega(t)|^\frac{p}{2},\,\,p\geq2,$$ which is convex. By differentiating respect to $\omega$ we have $H'(\omega)(t)=|\omega(t)|^\frac{p-2}{2}$. From Lemma \ref{ZAC} for the function $H(\omega)(t)$ we obtain  the following inequality
$$|\omega(t)|^\frac{p-2}{2}\mathcal{D}^\alpha_{0|t}\omega(t)\geq \frac{2}{p}\mathcal{D}^\alpha_{0|t}|\omega|^\frac{p}{2}(t).$$
Denote $\omega(t)=|u_{n}(x,t)-u_{n}(y,t)|^2$. Then, we obtain
\begin{equation*}\label{G081}\begin{split}
|u_{n}(x,t)-u_{n}(y,t)|^{p-2}\mathcal{D}^\alpha_{0|t}|u_{n}(x,t)-u_{n}(y,t)|^2\geq\frac{1}{p} \mathcal{D}^\alpha_{0|t}|u_{n}(x)-u_{n}(y)|^{p}.
\end{split}\end{equation*} 
Therefore, using \eqref{Z11} and the last inequality we get
\begin{equation*}\begin{split}
\left|P(u_{n},\mathcal{D}_{0|t}^{\alpha }u_{n}(t))\right|\geq
&\frac{1}{p}\int_{\Omega}\int_{\Omega}\frac{1}{|x-y|^{N+sp}} \mathcal{D}_{0|t}^{\alpha }|u_{n}(x,t)-u_{n}(y,t)|^{p}dxdy. 
\end{split}\end{equation*} 
Since the operator $\mathcal{D}_{0|t}^{\alpha }$ is with respect to the variable $t$ it follows that
 \begin{equation*}\begin{split}
\left|P(u_{n},\mathcal{D}_{0|t}^{\alpha }u_{n}(t))\right|&\geq
\frac{1}{p}\mathcal{D}_{0|t}^{\alpha }\int_{\Omega}\int_{\Omega}\frac{|u_{n}(x,t)-u_{n}(y,t)|^{p}}{|x-y|^{N+sp}} dxdy
\\&=\frac{1}{p}\mathcal{D}_{0|t}^{\alpha }[u_{n}(\cdot, t)]^p_{W^{s,p}(\Omega)}. 
\end{split}\end{equation*} 
 Finally, the identity \eqref{G8} can be rewritten as
 \begin{equation}\label{G80}\begin{split}
\|\mathcal{D}_{0|t}^{\alpha }u_{n}\|^2_{L^2(\Omega)}&+ \frac{1}{p}\mathcal{D}_{0|t}^{\alpha }[u_{n}(\cdot, t)]^p_{W^{s,p}(\Omega)}
\\&\leq\gamma\int_\Omega|u_n|^{m-1}u_n\mathcal{D}_{0|t}^{\alpha }u_{n}dx+\mu\int_\Omega|u_n|^{q-2}u_n\mathcal{D}_{0|t}^{\alpha }u_{n}dx.   
\end{split}\end{equation}
At this stage, we should study the different cases of the coefficients $\gamma$ and $\mu$.
\newline $\bullet$ The case $\gamma,\mu>0$. Using the H\"{o}lder and $\varepsilon$-Young inequalities
$$XY\leq \frac{\varepsilon}{p} X^p+C(\varepsilon)Y^{p'},\,\, \frac{1}{p}+\frac{1}{p'}=1,\,\, X,Y\geq0,$$where $\large\displaystyle C(\varepsilon)=\frac{1}{p'\varepsilon^{p'-1}}$ for the right hand side of \eqref{G80}, respectively, we get
\begin{equation}\label{G9}\begin{split}
\gamma\int _{\Omega}|u_{n}|^{m-1}&u_{n}\mathcal{D}_{0|t}^{\alpha}u_{n}dx \leq\gamma\left(\int _{\Omega}|u_{n}|^{2m}dx\right)^\frac{1}{2}\left(\int _{\Omega}\left|\mathcal{D}_{0|t}^{\alpha}u_{n}\right|^2dx\right)^\frac{1}{2}
\\&\le  \gamma \left\| u_{n} (\cdot, t) \right\|^m _{L^{2m}(\Omega) } \left\| \mathcal{D}_{0|t}^{\alpha } u_{n}(\cdot, t) \right\| _{L^2(\Omega) }
\\&\leq \frac{\varepsilon}{2}\gamma^2\left\|u_{n}(\cdot, t)\right\|_{L^{2m}(\Omega)}^{2m}+\frac{1}{2\varepsilon} \left\|\mathcal{D}_{0|t}^{\alpha}u_{n}(\cdot, t)\right\|_{L^2(\Omega)}^{2}
\end{split}\end{equation}
and
\begin{equation}\label{G10}\begin{split}
\mu\int_\Omega|u_n|^{q-2}&u_n\mathcal{D}_{0|t}^{\alpha }u_{n}dx\leq \mu \left(\int _{\Omega }|u_{n} |^{2(q-1)} dx \right)^{\frac{1}{2} } \left(\int _{\Omega }|\mathcal{D}_{0|t}^{\alpha } u_{n} |^{2} dx \right)^{\frac{1}{2} } 
\\&\le  \mu \left\| u_{n} (\cdot, t) \right\|^{q-1}_{L^{2(q-1)}(\Omega) } \left\| \mathcal{D}_{0|t}^{\alpha } u_{n}(\cdot, t) \right\| _{L^2(\Omega) } 
\\& \le  \frac{\varepsilon_1}{2} \mu^2\left\| u_{n} (\cdot, t) \right\|^{2(q-1)} _{L^{2(q-1)}(\Omega) }+\frac{1}{2\varepsilon_1} \left\| \mathcal{D}_{0|t}^{\alpha } u_{n}(\cdot, t) \right\| _{L^2(\Omega) }^{2}.   
\end{split}\end{equation}
From Lemma \ref{FCKN} we obtain 
\begin{equation}\begin{split}\label{FC1}
\frac{\varepsilon}{2}\gamma^2&\left\|u_{n}(\cdot, t)\right\|_{L^{2m}(\Omega)}^{2m}\leq \frac{\varepsilon}{2}\gamma^2 C[u_{n}(\cdot, t)]^{2ma}_{W^{s,p}(\Omega)}\left\|u_{n}(\cdot, t)\right\|_{L^2(\Omega)}^{2m(1-a)}
\\&\leq \mathcal{C}(\gamma,\varepsilon,\tilde{\varepsilon}, C)[u_{n}(\cdot, t)]^p_{W^{s,p}(\Omega)}+ \mathcal{C}(\tilde{\varepsilon})\left\|u_{n}(\cdot, t)\right\|_{L^2(\Omega)}^{\frac{2mp(1-a)}{p-2ma}} \end{split}
\end{equation}
and
\begin{equation}\begin{split}\label{FC2}
&\frac{\varepsilon_1}{2}\mu^2\left\| u_{n} (\cdot, t) \right\|^{2(q-1)} _{L^{2(q-1)}(\Omega) }\\&\leq \frac{\varepsilon_1}{2} \mu^2C_1[u_{n}(\cdot, t)]^{2(q-1)a}_{W^{s,p}(\Omega)}\left\|u_{n}(\cdot, t)\right\|_{L^2(\Omega)}^{2(q-1)(1-a)}
\\&\leq \mathcal{C}(\mu,\varepsilon_1,\tilde{\varepsilon}_1, C_1)[u_{n}(\cdot, t)]^p_{W^{s,p}(\Omega)}+ \mathcal{C}(\tilde{\varepsilon}_1)\left\|u_{n}(\cdot, t)\right\|_{L^2(\Omega)}^{\frac{2p(q-1)(1-a)}{p-2(q-1)a}}.  \end{split}
\end{equation}
Hence, from the last inequalities  \eqref{G80} we obtain
\begin{equation*}\label{G011}\begin{split}
\frac{1}{2}\|\mathcal{D}_{0|t}^{\alpha }&u_{n}(\cdot, t)\|^2_{L^2(\Omega)}+ \frac{1}{p}\mathcal{D}_{0|t}^{\alpha }[u_{n}(\cdot, t)]^p_{W^{s,p}(\Omega)}
\\&\le \mathcal{C}(\gamma,\varepsilon,\tilde{\varepsilon}, C)[u_{n}(\cdot, t)]^p_{W^{s,p}(\Omega)}+ \mathcal{C}(\tilde{\varepsilon})\left\|u_{n}(\cdot, t)\right\|_{L^2(\Omega)}^{\frac{2mp(1-a)}{p-2ma}}\\&+C(\varepsilon) \left\|\mathcal{D}_{0|t}^{\alpha}u_{n}(\cdot, t)\right\|_{L^2(\Omega)}^{2}+\mathcal{C}(\mu,\varepsilon_1,\tilde{\varepsilon}_1, C_1)[u_{n}(\cdot, t)]^p_{W^{s,p}(\Omega)}\\&+ \mathcal{C}(\tilde{\varepsilon}_1)\left\|u_{n}(\cdot, t)\right\|_{L^2(\Omega)}^{\frac{2p(q-1)(1-a)}{p-2(q-1)a}}+C(\varepsilon_1) \left\| \mathcal{D}_{0|t}^{\alpha } u_{n}(\cdot, t) \right\| _{L^2(\Omega) }^{2}.
\end{split}\end{equation*}
After choosing the constants $\varepsilon, \varepsilon_1$ such that $\large\displaystyle1>\frac{1}{\varepsilon}+\frac{1}{\varepsilon_1}$, and from the estimate \eqref{L2} it follows that
\begin{equation}\label{ES1}\begin{split}
\|\mathcal{D}_{0|t}^{\alpha }u_{n}(\cdot, t)\|^2_{L^2(\Omega)}&+ \frac{1}{p}\mathcal{D}_{0|t}^{\alpha }[u_{n}(\cdot, t)]^p_{W^{s,p}(\Omega)}
\\&\leq \mathcal{C_*}[u_{n}(\cdot, t)]^p_{W^{s,p}(\Omega)}+B(T),
\end{split}\end{equation}
where $\mathcal{C_*}:=\mathcal{C}(\gamma,\varepsilon,\tilde{\varepsilon}, C)+\mathcal{C}(\mu,\varepsilon_1,\tilde{\varepsilon}_1, C_1)$ and $  B(T):=A(\tilde{\varepsilon},T)+A(\tilde{\varepsilon}_1,T).$
Therefore,
\begin{equation}\label{Q1}\begin{split}
&\mathcal{D}_{0|t}^{\alpha }[u_{n}(\cdot, t)]^p_{W^{s,p}(\Omega)}
\le \mathcal{C_*}(p)[u_{n}(\cdot, t)]^p_{W^{s,p}(\Omega)}+ B(p,T).
\end{split}\end{equation}
Define $y(t):=[u_{n}(\cdot, t)]^p_{W^{s,p}(\Omega)}$ and using the left Riemann-Liouville integral $I_{0|t}^\alpha$ to \eqref{Q1}, according to Property \ref{ID}, we arrive at 
\begin{equation*}\label{Q3}\begin{split}
&y(t)\le y(0)+\frac{1}{\Gamma(\alpha)}\int_0^t (t-s)^{\alpha-1}\left[\mathcal{C_*}(p)y(s)+B(p,T)\right]ds,
\end{split}\end{equation*}which satisfies (see \cite{TISDELL}, Lemma 3.1) 
\begin{equation*} \label{Q4}
y(t)\leq y(0)E_{\alpha,1} (\mathcal{C_*}(p)t^{\alpha } )+\frac{B(T)}{\mathcal{C_*}}[E_{\alpha,1}(\mathcal{C_*}(p)t^\alpha)-1]:=E(p,T).
\end{equation*}
Finally, we have
\begin{equation}\label{MT}
[u_{n}(\cdot, t)]^p_{W^{s,p}(\Omega)}\leq E(p,T)\,\,\,\text{for all}\,\,\,t\in[0,T].
\end{equation}
From the inequalities \eqref{ES1} and \eqref{MT}, we obtain
\begin{equation}\label{ES2}\begin{split}
\|\mathcal{D}_{0|t}^{\alpha }u_{n}(\cdot, t)\|^2_{L^2(\Omega)}&+ \mathcal{D}_{0|t}^{\alpha }[u_{n}(\cdot, t)]^p_{W^{s,p}(\Omega)}
\\&\leq \mathcal{C_*}E(p,T)+B(p,T):=L(p,T).
\end{split}\end{equation}
Integrating both sides of \eqref{ES2} by the left Riemann-Liouville integral $I_{0|t}^\alpha$ and using Property \ref{ID}, the last inequality becomes 
\begin{equation}\label{ES3}\begin{split}
I_{0|t}^\alpha\|\mathcal{D}_{0|t}^{\alpha }u_{n}(\cdot, t)\|^2_{L^2(\Omega)}&+ [u_{n}(\cdot, t)]^p_{W^{s,p}(\Omega)}
\\&\le [u_{n}(\cdot, 0)]^p_{W^{s,p}(\Omega)}+I_{0|t}^\alpha \left[L(p,T)\right].
\end{split}\end{equation}
Consequently, applying the left Caputo derivative $\mathcal{D}_{0|t}^\alpha$  due to Property \ref{ID}, also noting the facts that $[u_{n}(\cdot, t)]^p_{W^{s,p}(\Omega)}$ is bounded and $\mathcal{D}_{0|t}^\alpha[u_{n}(\cdot, 0)]_{W^{s,p}(\Omega)}=0$, we can establish  
\begin{equation}\label{ES03}\begin{split}
\|\mathcal{D}_{0|t}^{\alpha }u_{n}(\cdot, t)\|^2_{L^2(\Omega)}
\le L(p,T),
\end{split}\end{equation}
where $L(p,T)$ does not dependent to $n$.
\newline $\bullet$ The case $\gamma>0$ and $\mu\leq0$. Accordingly, the inequality \eqref{G80} becomes
 \begin{equation*}\begin{split}
\|\mathcal{D}_{0|t}^{\alpha }u_{n}\|^2_{L^2(\Omega)}&+ \frac{1}{p}\mathcal{D}_{0|t}^{\alpha }[u_{n}(\cdot, t)]^p_{W^{s,p}(\Omega)}\leq\gamma\int_\Omega|u_n|^{m-1}u_n\mathcal{D}_{0|t}^{\alpha }u_{n}dx.   
\end{split}\end{equation*}
From the estimates \eqref{G9} and \eqref{FC1} we can rewrite the last inequality in the form
\begin{equation*}\begin{split}
\frac{1}{2}\|\mathcal{D}_{0|t}^{\alpha }u_{n}(\cdot, t)\|^2_{L^2(\Omega)}&+ \frac{1}{p}\mathcal{D}_{0|t}^{\alpha }[u_{n}(\cdot, t)]^p_{W^{s,p}(\Omega)}
\\&\le \mathcal{C}(\gamma,\varepsilon,\tilde{\varepsilon}, C)[u_{n}(\cdot, t)]^p_{W^{s,p}(\Omega)}+ \mathcal{C}(\tilde{\varepsilon})\left\|u_{n}(\cdot, t)\right\|_{L^2(\Omega)}^{\frac{2mp(1-a)}{p-2ma}}\\&+C(\varepsilon) \left\|\mathcal{D}_{0|t}^{\alpha}u_{n}(\cdot, t)\right\|_{L^2(\Omega)}^{2}.
\end{split}\end{equation*}
By choosing $\varepsilon$ small enough such that $\large\displaystyle\frac{1}{2}-C(\varepsilon)>0$, and using \eqref{L2} it follows that
\begin{equation*}\begin{split}
\|\mathcal{D}_{0|t}^{\alpha }u_{n}(\cdot, t)\|^2_{L^2(\Omega)}&+ \frac{1}{p}\mathcal{D}_{0|t}^{\alpha }[u_{n}(\cdot, t)]^p_{W^{s,p}(\Omega)}
\\&\leq \mathcal{C}(\gamma,\varepsilon,\tilde{\varepsilon}, C)[u_{n}(\cdot, t)]^p_{W^{s,p}(\Omega)}+\mathcal{C}(\tilde{\varepsilon})A(T).
\end{split}\end{equation*}
The conclusion can be obtained, as in the previous case.

\hfill \break $\bullet$ The case $\gamma\leq0$ and $\mu>0$. The inequality \eqref{G80} becomes
 \begin{equation*}\begin{split}
\|\mathcal{D}_{0|t}^{\alpha }u_{n}\|^2_{L^2(\Omega)}&+ \frac{1}{p}\mathcal{D}_{0|t}^{\alpha }[u_{n}(\cdot, t)]^p_{W^{s,p}(\Omega)}\leq\mu\int_\Omega|u_n|^{q-2}u_n\mathcal{D}_{0|t}^{\alpha }u_{n}dx.   
\end{split}\end{equation*}
Using the estimates \eqref{G10} and \eqref{FC2} we have
\begin{equation*}\begin{split}
\frac{1}{2}&\|\mathcal{D}_{0|t}^{\alpha }u_{n}(\cdot, t)\|^2_{L^2(\Omega)}+ \frac{1}{p}\mathcal{D}_{0|t}^{\alpha }[u_{n}(\cdot, t)]^p_{W^{s,p}(\Omega)}
\\&\leq \mathcal{C}(\mu,\varepsilon_1,\tilde{\varepsilon}_1, C_1)[u_{n}(\cdot, t)]^p_{W^{s,p}(\Omega)}+ \mathcal{C}(\tilde{\varepsilon}_1)\left\|u_{n}(\cdot, t)\right\|_{L^2(\Omega)}^{\frac{2p(q-1)(1-a)}{p-2(q-1)a}}\\&+C(\varepsilon_1) \left\| \mathcal{D}_{0|t}^{\alpha } u_{n}(\cdot, t) \right\| _{L^2(\Omega) }^{2}.\end{split}\end{equation*}
Taking the constant $\varepsilon_1$ small enough such that $\large\displaystyle\frac{1}{2}-C(\varepsilon_1)>0$, and noting \eqref{L2} it follows that
\begin{equation*}\begin{split}
\|\mathcal{D}_{0|t}^{\alpha }u_{n}(\cdot, t)\|^2_{L^2(\Omega)}&+ \frac{1}{p}\mathcal{D}_{0|t}^{\alpha }[u_{n}(\cdot, t)]^p_{W^{s,p}(\Omega)}
\\&\le \mathcal{C}(\mu,\varepsilon_1,\tilde{\varepsilon}_1, C_1)[u_{n}(\cdot, t)]^p_{W^{s,p}(\Omega)}+\mathcal{C}(\tilde{\varepsilon}_1)A(T).
\end{split}\end{equation*}
The conclusion of this case also can be obtained, as in the first case.
\newline $\bullet$ The case $\gamma,\mu\leq0$. Then, the estimate \eqref{G80} can rewritten as
 \begin{equation*}\begin{split}
\|\mathcal{D}_{0|t}^{\alpha }u_{n}\|^2_{L^2(\Omega)}&+ \frac{1}{p}\mathcal{D}_{0|t}^{\alpha }[u_{n}(\cdot, t)]^p_{W^{s,p}(\Omega)}\leq0.   
\end{split}\end{equation*}
Applying the left Riemann-Liouville integral $I^\alpha_{0|t}$ to the last inequality from Property \eqref{ID} it follows that 
 \begin{equation*}\begin{split}
I^\alpha_{0|t}\|\mathcal{D}_{0|t}^{\alpha }u_{n}\|^2_{L^2(\Omega)}&+ \frac{1}{p}[u_{n}(\cdot, t)]^p_{W^{s,p}(\Omega)}\leq\frac{1}{p}[u_{n}(\cdot, 0)]^p_{W^{s,p}(\Omega)}.   
\end{split}\end{equation*}
From the estimate \eqref{MT} we arrive at
\begin{equation*}\begin{split}
I^\alpha_{0|t}\|\mathcal{D}_{0|t}^{\alpha }u_{n}\|^2_{L^2(\Omega)}&\leq\frac{1}{p}[u_{n}(\cdot, 0)]^p_{W^{s,p}(\Omega)}.   
\end{split}\end{equation*}
Next, using the left Riemann-Liouville fractional derivative for the last inequality, and from Property \ref{DI} and the identity 
$$\mathbb{D}_{0|t}^{\alpha } [C]=\frac{C}{\Gamma(1-\alpha)}t^{-\alpha},$$
it follows that
\begin{equation*}\begin{split}
\|\mathcal{D}_{0|t}^{\alpha }u_{n}\|^2_{L^2(\Omega)}&\leq\frac{t^{-\alpha}}{p\Gamma(1-\alpha)}[u_{n}(\cdot, 0)]^p_{W^{s,p}(\Omega)},\,\,\,\text{for all}\,\,\,t\in[0,T].  
\end{split}\end{equation*}
Passing to the limit where $n\to\infty$, from the estimates in the previous estimates, we conclude that
\begin{equation}\label{3.1}\left\{\begin{array}{l}
u_n\in W^{s,p}(\Omega)\cap L^2(\Omega;L^\infty(0,T)),\\{}\\
\mathcal{D}_{0|t}^{\alpha }u_n\in L^2(\Omega;L^\infty(0,T)).\end{array}\right.\end{equation}
Consequently, from \eqref{3.1} there exists a subsequence $\{u_{n_k}\}$ of $\{u_n\}_{n\in\mathbb{N}}$ weak star converging to some element from $W^{s,p}(\Omega)\cap L^2(\Omega;L^\infty(0,T))$ such as  
\begin{equation}\label{G12}\begin{split}
 u_{n_k}\overset{*}{\rightharpoonup} u\,\,\,\text{in}\,\, W^{s,p}(\Omega)\cap L^2(\Omega;L^\infty(0,T)),\\\mathcal{D}_{0|t}^{\alpha }u_{n}\overset{*}{\rightharpoonup} \mathcal{D}_{0|t}^{\alpha }u\,\,\, \text{in}\,\, L^2(\Omega;L^\infty(0,T)).    
\end{split}\end{equation}
Similarly, from \eqref{G12}, we deduce that one can extract a subsequence $\{u_{n_k}\}$ of $\{u_n\}_{n\in\mathbb{N}}$ such that
\begin{equation}\label{G14}
u_{n_k}\overset{*}{\rightharpoonup}u_n \,\,\,\text{in}\,\,\, W^{s,p}(\Omega)\cap L^2(\Omega; L^\infty(0,T)). 
\end{equation}
Since, $W^{s,p}(\Omega)\cap L^2(\Omega ;L^\infty(0,T))\subset L^2(\Omega;L^\infty(0,T))$, from \eqref{3.1} it follows that the sequences $\{u_n\}_{n\in\mathbb{N}}$ and $\mathcal{D}_{0|t}^{\alpha }u_{n}$ are bounded in $L^2(\Omega;L^\infty(0,T))$. Then, it particular $\{u_n\}_{n\in\mathbb{N}}$ is bounded in $W^{s,p}(\Omega)$. It is known by Lemma \ref{EMBD}, that the embedding of $W^{s,p}(\Omega)$ in $L^2(\Omega)$ is continuous. It gives us that  the subsequence $\{u_{n_k}\}$  can be chosen such that $u_{n_k}\to u$ in the norm of $L^2(\Omega)$, converging almost everywhere.
The previous argument leads us to the limit in \eqref{G2}. However, we multiply \eqref{G2} by $\theta_k(t)\in C[0,T]$, then summing up both sides over $k=\overline{1,n}$, to get
\begin{equation*}\label{G014}\begin{split}
&\int_\Omega\mathcal{D}_{0|t}^{\alpha }u_{n}\cdot \Psi dx+ P(u_{n},\Psi)
=\gamma\int_\Omega|u_n|^{m-1}u_n\cdot \Psi dx+\mu\int_\Omega|u_n|^{q-2}u_n\cdot \Psi dx,   
\end{split}\end{equation*}
almost everywhere in $t\in [0,T]$, where $\large\displaystyle\Psi (x,t)=\sum _{k=1}^{n}\theta_{k} (t)\omega _{k} (x) $.

Taking into account the obtained inclusions and convergence, we pass in \eqref{G2} to the limit as $n\to \infty $ and obtain Definition \ref{weak} for $\varphi =\Psi $. Since the set of all functions $\Psi (x,t)$ is dense in $\Pi$, then the limit relation holds for all $\varphi=\varphi (x,t)\in W_0^{s,p}(\Omega;L^p(0,T)).$
\\$\bullet$ The case $1<q-1<m <p.$ 
We repeat the entire procedure described above by simply changing the condition inequality \eqref{INQ1} to $1<q-1<m<p.$
\proofend

\subsection{ Uniqueness of a weak solution} 
In this subsection we discuss the uniqueness of weak solutions.
\begin{theorem}
Let  $u_0\in W^{s,p}_0(\Omega), u_0\geq0$ and $sp<N$. Then the local real-valued weak solution of \eqref{01} on $(0,T),$ $T<\infty$, is unique.
\end{theorem}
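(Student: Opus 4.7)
The plan is to deduce uniqueness directly from the comparison principle already established in Theorem \ref{CP}. Let $u_1,u_2\in\Pi$ be two weak solutions of \eqref{01} emanating from the same initial datum $u_0\geq0$. By Definition \ref{weak}, a weak solution is simultaneously a weak subsolution and a weak supersolution in the sense of Definition \ref{sub}, and both solutions satisfy $u_i(\cdot,0)=u_0$ and $u_i|_{\partial\Omega}=0$ for $i=1,2$.

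First I verify that the hypotheses of Theorem \ref{CP} are in force. Under the existence hypotheses of Theorem \ref{thloc}, one has either $1<m<q-1<p-1$ or $1<q-1<m<p-1$, so in particular $m>1$ and $q>2$. Consequently, whatever the signs of $\gamma$ and $\mu$ happen to be, the quadruple $(m,q,\gamma,\mu)$ falls under exactly one of the four case distinctions listed in Theorem \ref{CP}. With the hypotheses matched, I invoke Theorem \ref{CP} twice: with the choice (subsolution, supersolution)$=(u_1,u_2)$, using $u_0\leq u_0$ as the required ordering of the initial data, the principle gives $u_1\leq u_2$ almost everywhere in $\Omega_T$; swapping the roles to $(u_2,u_1)$ yields $u_2\leq u_1$ almost everywhere in $\Omega_T$. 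Combining both inequalities produces $u_1=u_2$ a.e.\ in $\Omega_T$, which is the claimed uniqueness.

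The step that I expect to require the most care is checking that the Lipschitz-type prefactors $L(m)$ and $L(q)$ entering inequalities \eqref{I3}--\eqref{I4} inside the proof of Theorem \ref{CP} stay finite for the pair $u_1,u_2\in\Pi$ in the regime $sp<N$ imposed by Theorem \ref{thloc}; the embedding \eqref{CCW} invoked in the comparison argument is originally stated in the Morrey regime $sp>N$. I would handle this by exploiting the $L^\infty(0,T)$-in-time component built into the norm on $\Pi$, together with the $L^2$-boundedness in space, to control $\max(\|u_i\|^{m-1},\|u_i\|^{q-2})$ through a standard truncation-and-approximation argument. Once these prefactors are known to be finite, the remainder of the argument is a two-line application of Theorem \ref{CP}.
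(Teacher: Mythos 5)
Your proof is essentially the same as the paper's, but packaged more economically: the paper re-runs the entire machinery of Theorem~\ref{CP} inside the uniqueness proof (subtracting the two weak formulations, testing with $(u-v)_+$, invoking the algebraic inequality \eqref{I1}, the Lipschitz bounds \eqref{E12}--\eqref{E13}, Lemma~\ref{AA}, and the fractional Gronwall argument) and then concludes $(u-v)_+=0$, whereas you exploit the observation, made explicitly in Definition~\ref{sub}, that a weak solution is simultaneously a weak subsolution and a weak supersolution, and simply invoke Theorem~\ref{CP} twice with the roles of sub- and supersolution swapped. Both arguments rest on exactly the same ingredients; yours merely cites the comparison theorem instead of reproving it, and your explicit check that the existence hypotheses $1<m<q-1<p-1$ or $1<q-1<m<p-1$ (hence $m>1$, $q>2$) guarantee, for every sign pattern of $(\gamma,\mu)$, that one of the four cases in Theorem~\ref{CP} applies is a useful detail that the paper leaves implicit. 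Your concern about the Morrey-type embedding \eqref{CCW} being stated in the regime $sp>N$ while the theorem assumes $sp<N$ is a legitimate observation about the paper's comparison machinery, but it is not a gap specific to your argument: the paper's own uniqueness proof uses the identical bounds $L(m),L(q)$ built on \eqref{CCW}, so whatever regularity the authors intend for functions in $\Pi$ to make those bounds finite applies equally to your two invocations of Theorem~\ref{CP}.
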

\proof
Assume that we have two real-valued weak solutions $u$ and $v$ for problem \eqref{01}. Hence, by Definition \ref{weak}, we obtain 
\begin{equation*}
\begin{split}
\int_0^T\int_{\Omega}\mathcal{D}_{0|t}^\alpha u\varphi dxdt&\!+\!\int_0^T\int_{\Omega}\frac{|u(x)\!-\!u(y)|^{p-2}(u(x)\!-\!u(y))}{|x-y|^{N+sp}}(\varphi(x)\!-\!\varphi(y))dxdydt\\&=\gamma\int_0^T\int_{\Omega}|u|^{m-1}u\varphi dxdt+\mu\int_0^T\int_{\Omega}|u|^{q-2}u\varphi dxdt    
\end{split}     
\end{equation*}
and
\begin{equation*}
\begin{split}
\int_0^T\int_{\Omega}\mathcal{D}_{0|t}^\alpha v\varphi dxdt&+\int_0^T\int_{\Omega}\frac{|v(x)\!-\!v(y)|^{p-2}(v(x)\!-\!v(y))}{|x-y|^{N+sp}}(\varphi(x)\!-\!\varphi(y))dxdydt\\&=\gamma\int_0^T\int_{\Omega}|v|^{m-1}v\varphi dxdt+\mu\int_0^T\int_{\Omega}|v|^{q-2}v\varphi dxdt.    
\end{split}     
\end{equation*}
By subtracting the previous two inequalities, it follows for $t\in(0,T]$ that
\begin{equation*}\begin{split}\label{UU1}
&\int_0^t\int_{\Omega}\mathcal{D}_{0|\tau}^\alpha [u-v]\varphi dxd\tau+\underbrace{\int_0^t\int_{\Omega}[(-\Delta)^s_pu-(-\Delta)^s_pv]\varphi dxd\tau}_{\mathcal{C}}\\ & =\underbrace{\gamma\int_0^t\int_{\Omega}(|u|^{m-1}u-|v|^{m-1}v)\varphi dxd\tau}_{\mathcal{A}}+\underbrace{\mu\int_0^t\int_{\Omega}(|u|^{q-2}u-|v|^{q-2}v)\varphi dxd\tau}_{\mathcal{B}}.\end{split}\end{equation*} 
Using the fact that $\mathcal{C}$ is nonnegative from \eqref{I1}, and the estimates \eqref{E12}, \eqref{E13} for $\mathcal{A}, \mathcal{B}$, respectively, we deduce that
\begin{equation*}\begin{split}
\int_0^t\int_{\Omega}\mathcal{D}_{0|\tau}^\alpha [u-v]\varphi dxd\tau&\leq \gamma L(m)\int_0^t\int_{\Omega}|u-v|\varphi dxd\tau\\&+\mu L(q)\int_0^t\int_{\Omega}|u-v|\varphi dxd\tau.\end{split}\end{equation*} 
At this stage choosing the real-valued test function $$\varphi=(u-v)_+=\max\{u-v, 0\}$$ and using Lemma \ref{AA}, we can rewrite the last inequality as
\begin{equation*}\begin{split}\label{UN}
&\frac{1}{2}\int_0^t\int_{\Omega}\mathcal{D}_{0|\tau}^\alpha(u-v)_+^2 dxd\tau\\&\leq \gamma  L(m)\int_0^t\int_{\Omega}(u-v)_+^2 dxd\tau+\mu L(q)\int_0^t\int_{\Omega}(u-v)_+^2 dxd\tau.\end{split}\end{equation*} 
Therefore, we should consider three cases depending on $\gamma, \mu$. By repeating the entire procedure as in the proof of Theorem \ref{CP}, we obtain the main inequality
\begin{equation*}\begin{split}
\int_{\Omega}(u-v)_+^2 dx\leq0,\end{split}\end{equation*}which is equivalent to $(u-v)_+=0$. Finally, we conclude that $u=v$.\proofend

\section{Global existence and blow-up of solutions}
\setcounter{section}{5} \setcounter{equation}{0}
\subsection{Blow-up of solution} In this subsection we will show the blow-up of solution to \eqref{01} using the comparison principle.

Let $\xi(x)>0$ and $\lambda_1(\Omega)>0$ be the first eigenfunction and the first eigenvalue \cite[Theorem 5]{Lindgren}, respectively, related to the Dirichlet problem:
\begin{equation}\label{EF01} 
\left\{\begin{array}{l}
(-\Delta)^s_p\xi(x)=\lambda_1(\Omega) |\xi(x)|^{p-2}\xi(x),\,\,\, x\in\Omega,\\{}\\
\xi(x)=0,\,\,\, x\in \mathbb{R}^N\setminus\Omega, \end{array}\right.\end{equation}
with $\|\xi\|^2_{L^2(\Omega)}= 1.$
\begin{theorem}\label{BB}
Let $p\geq 2,$ $u_0>0$, and assume that one of the following conditions holds:\\
$(a)$ $p=q\geq2, m>1$ and $\lambda_1(\Omega)\geq\mu, \gamma>0$;\\
$(b)$ $p-1=m\geq1, q>2$ and $ \lambda_1(\Omega)\geq\gamma, \mu>0$;\\
$(c)$ $p\geq2, m>1, q\geq1$ and $\lambda_1(\Omega),\gamma>0, \mu\leq 0$;\\
$(d)$ $p\geq2, m+1=q>2$ and $\gamma, \mu, \lambda_1(\Omega)>0.$\\
Then the positive solution $u(x,t)$ of \eqref{01} blows up in finite time 
$$T^*=(k\Gamma(2-\alpha))^{\frac{2}{2-2\alpha-k}},$$
where $k=m-1$ in cases (a), (c), (d) and $k=q-2$ in cases (b), (d) and $\alpha\in(0,1)$, $\Gamma$ is the Euler Gamma function, namely, we have $$\lim\limits_{t\rightarrow T^*}u(x,t)=+\infty.$$
\end{theorem}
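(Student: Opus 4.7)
The plan is to construct a subsolution $\underline{u}$ that blows up at time $T^*$ and invoke the comparison principle (Theorem~\ref{CP}) to force the actual solution $u$ to blow up no later than $T^*$. A natural ansatz is the separated form $\underline{u}(x,t) = g(t)\xi(x)$, where $\xi>0$ is the first Dirichlet eigenfunction of $(-\Delta)^s_p$ on $\Omega$ as given by \eqref{EF01} (normalized so $\|\xi\|_{L^2(\Omega)} = 1$) and $g(t) > 0$ is a scalar function to be chosen. Because of the positive homogeneity of $(-\Delta)^s_p$ together with the eigenvalue identity $(-\Delta)^s_p\xi = \lambda_1(\Omega)\xi^{p-1}$, substituting the ansatz into the subsolution inequality of Definition~\ref{sub} and either testing against $\xi$ with $\|\xi\|_{L^2(\Omega)}=1$, or dividing pointwise by $\xi>0$, reduces the spatial PDE inequality to a scalar fractional ODE inequality of the form
\[
\mathcal{D}_{0|t}^{\alpha} g(t) + \lambda_1(\Omega)\,g(t)^{p-1} \;\le\; \gamma\, g(t)^{m} + \mu\, g(t)^{q-1}.
\]

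In each of the four cases (a)--(d) the compatibility between the exponents $p,q,m$ and the signs of $\gamma,\mu$ is arranged precisely so that the eigenvalue term on the left can be absorbed into one of the nonlinear terms on the right. For example, in case (a), $p=q$ together with $\lambda_1(\Omega)\ge\mu$ allows us to discard $(\lambda_1(\Omega)-\mu)g^{p-1}\ge0$ in favor of $\gamma g^m$ with $m>1$; case (b) is symmetric under $p-1=m$ with $\lambda_1(\Omega)\ge\gamma$; cases (c) and (d) are handled in the same spirit. In every instance the reduced inequality collapses to a single-term fractional differential inequality
\[
\mathcal{D}_{0|t}^{\alpha} g(t) \;\le\; C\, g(t)^{k+1},
\]
with $k = m-1$ in cases (a),(c),(d) and $k = q-2$ in (b),(d), matching the two alternatives in the statement.

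To finish, I would exhibit an explicit blow-up solution of this fractional inequality via a power-type ansatz (for example $g(t) = A(T^*-t)^{-\theta}$, or an ansatz based on $t^{\sigma}$ tuned to balance both sides), using $\mathcal{D}_{0|t}^{\alpha} t^{\sigma} = \Gamma(\sigma+1)/\Gamma(\sigma+1-\alpha)\,t^{\sigma-\alpha}$ to produce the $\Gamma(2-\alpha)$ factor in the announced $T^* = (k\Gamma(2-\alpha))^{2/(2-2\alpha-k)}$. Since $u_0>0$ and $\xi$ is bounded in $\Omega$, $g(0)$ can be scaled down so $\underline{u}(x,0)\le u_0(x)$; the sign assumptions on $\gamma,\mu$ in each of (a)--(d) fall inside the parameter regimes covered by Theorem~\ref{CP}, so the comparison principle yields $u(x,t)\ge g(t)\xi(x)$ a.e.\ on $\Omega_T$, and blow-up of $g$ at $T^*$ transfers to $u$. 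The main obstacle is the reduction step: the exponents multiplying $\xi^{p-1}, \xi^{m}, \xi^{q-1}$ do not generally match, so the passage from the spatial inequality to the scalar ODE requires either working on a subdomain where $\xi$ is bounded below, renormalizing $\xi$ by $\|\xi\|_{\infty}$, or testing weakly against $\xi$ with H\"older/power-mean estimates. The other technical point is the rigorous Caputo derivative computation for the chosen blow-up profile, from which the stated exponent $2/(2-2\alpha-k)$ in $T^*$ must emerge as a balancing condition rather than be postulated.
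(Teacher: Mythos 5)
Your plan — separated ansatz $v(x,t)=\xi(x)f(t)$ built from the fractional $p$-Laplace eigenfunction, reduction to a scalar Caputo differential inequality, and an explicit power-law blow-up profile combined with the comparison principle — is the same basic strategy the paper uses. The execution diverges in two ways that matter.

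First, the paper does not test against $\xi$ or divide by $\xi$. It multiplies by $v$ itself and integrates, which places $f(t)\mathcal{D}_{0|t}^{\alpha}f(t)$ on the left; it then applies Lemma \ref{AA}, $\tfrac12\mathcal{D}_{0|t}^{\alpha}f^2\leq f\,\mathcal{D}_{0|t}^{\alpha}f$, to convert this to a Caputo inequality for $z:=f^2$, namely $\mathcal{D}_{0|t}^{\alpha}z\leq C\,z^{(m+1)/2}$ in case (a). The blow-up profile is then taken for $z$, not $f$: $z(t)=b(b-t)^{-2/(m-1)}$, with $b=\bigl((m-1)\Gamma(2-\alpha)\bigr)^{2/(3-2\alpha-m)}$. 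This is precisely the announced $T^*$ with $k=m-1$. Crucially, the Caputo derivative of this profile is \emph{not} computed via the $\mathcal{D}_{0|t}^{\alpha}t^{\sigma}$ formula you suggest; instead the paper bounds $z'(\tau)\leq z'(t)$ for $\tau<t$ (monotonicity of the integrand), pulls the $z$-dependent factor out of the Caputo integral, and uses $\int_0^t(t-\tau)^{-\alpha}d\tau=t^{1-\alpha}/(1-\alpha)$ together with $(1-\alpha)\Gamma(1-\alpha)=\Gamma(2-\alpha)$. Your power-function identity would not close the estimate for a profile singular at $t=T^*$, so this pointwise derivative bound is the step your sketch is missing.

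Second, among your two reduction options, the "divide pointwise by $\xi$" route genuinely fails: after dividing, the reaction terms become $\gamma\,\xi^{m-1}g^{m}$ and $\mu\,\xi^{q-2}g^{q-1}$, and $\xi$ vanishes on $\partial\Omega$, so there is no uniform positive lower bound on $\xi^{m-1}$ or $\xi^{q-2}$ over $\Omega$; the resulting "scalar" inequality is not scalar, and shrinking to an interior subdomain changes the eigenvalue and the boundary data and so undercuts both the eigenfunction identity and the comparison argument. The "test against $\xi$" route is closer to what the paper does, but note that testing against a single $\varphi$ does not by itself verify that $v=\xi f$ is a weak subsolution in the sense of Definition \ref{sub}; the paper's own passage from "multiply by $v$ and integrate" to "apply the comparison principle" elides the same step, so the obstacle you flagged is a genuine one that neither your sketch nor the paper's written proof fully discharges.

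Summary: same high-level route; you are missing the $z=f^2$ / Lemma \ref{AA} maneuver and the specific Caputo estimate of the singular profile, and one of your two proposed reductions (pointwise division by $\xi$) cannot work as stated because of the boundary degeneracy of $\xi$.
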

\proof
First we will prove the cases (a) and (b). 

We shall prove this theorem by constructing a proper weak subsolution to \eqref{01}.
We will seek the solution $v(x,t)=\xi(x)f(t)>0$ with the initial data $v_0(x)=\xi(x)f(0),$ such that $0\leq v_0(x)\leq u_0(x)$ on $x\in\Omega$.  
Multiplying the equation \eqref{01} by $v(x,t)$, and integrating
the equality over $\Omega$, one obtains
 \begin{equation*}\label{xf}\begin{split}
f(t)\mathcal{D}_{0|t}^{\alpha }f(t)\|\xi\|^2_{L^2(\Omega)}&+\lambda_1(\Omega) f^{p}(t)\|\xi^{p}\|^2_{L^2(\Omega)}\\&=\gamma f^{m+1}(t)\|\xi^{m+1}\|^2_{L^2(\Omega)}+\mu f^{q}(t)\|\xi^{q}\|^2_{L^2(\Omega)}.     
 \end{split}\end{equation*}
Hence, from Lemma \ref{AA}, it follows that
 \begin{equation}\label{xf1}\begin{split}
\frac{1}{2}\mathcal{D}_{0|t}^{\alpha }f^2(t)&+\lambda_1 C(p) f^{p}(t)\leq\gamma C(m)f^{m+1}(t)+\mu C(q)f^{q}(t).  \end{split}\end{equation}
At this stage, by denoting $f^2(t)=z(t)$, we have to consider the cases:

$(a)$ If $p=q \geq 2, m>1$ and $\lambda_1(\Omega)   \geq \mu, \gamma>0$, then \eqref{xf1} can rewritten as
 \begin{equation*}\label{xf2}\mathcal{D}_{0|t}^{\alpha }z(t)\leq2 C(m,\gamma) z^\frac{m+1}{2}(t).
\end{equation*}
Using the idea of paper \cite{Coclite}, we set for any $t\in(0,b)$,
\begin{equation*}\label{xf3}z(t)=\frac{b}{(b-t)^\frac{2}{m-1}},\,\,\,b:=b(m-1,\alpha)=\left((m-1)\Gamma(2-\alpha)\right)^\frac{2}{3-2\alpha-m}.
\end{equation*}
Accordingly, we have the initial condition $z(0)=z_0>0$.
We should note that the function $z(t)$, 
$\lim_{t\to b^-}z(t)\to\infty,$ diverges at $t=b$.
Moreover, for any $t\in(0,b)$ and any $\tau\in(0,t)$ we can obtain
\begin{align*}\label{xf4}\frac{\partial}{\partial\tau}z(\tau):&=\frac{2b}{(m-1)(b-\tau)^\frac{m+1}{m-1}}\\&\leq\frac{2b}{(m-1)(b-t)^\frac{m+1}{m-1}}\\&=\frac{z^\frac{m+1}{2}(t)}{2(m-1)b^\frac{m-1}{2}}.
\end{align*}
From Definition \ref{CD} it follows for all $t\in(0,b)$,
\begin{equation*}\label{xf5}\begin{split}
\mathcal{D}^\alpha_{0|t}z(t)&=\frac{2 C(m,\gamma)}{\Gamma(1-\alpha)}\int_0^t\frac{z'(\tau)}{(t-\tau)^\alpha}d\tau
\\&\leq\frac{2 C(m,\gamma) z^\frac{m+1}{2}(t)}{2(m-1)b^\frac{m-1}{2}\Gamma(1-\alpha)}\int_0^t\frac{d\tau}{(t-\tau)^\alpha}\\&=\frac{C(m,\gamma)t^{1-\alpha}z^\frac{m+1}{2}(t)}{(m-1)b^\frac{m-1}{2}\Gamma(2-\alpha)}
\\&\leq\frac{ C(m,\gamma)b^{1-\alpha}z^\frac{m+1}{2}(t)}{(m-1)b^\frac{m-1}{2}\Gamma(2-\alpha)}    
\\&=\frac{ C(m,\gamma)b^\frac{3-2\alpha-m}{2}z^\frac{m+1}{2}(t)}{(m-1)\Gamma(2-\alpha)}
\\&= C(m,\gamma)z^\frac{m+1}{2}(t).
\end{split}
\end{equation*}
Therefore, $z(t)$ diverges at $t=b$ yielding that
$$T_*\leq b=b(m-1,\alpha).$$

$(b)$ If $p-1=m\geq1, q>2$ and $ \lambda_1(\Omega)\geq\gamma, \mu>0$, then from \eqref{xf1} we obtain
 \begin{equation*}\label{xf02}\mathcal{D}_{0|t}^{\alpha }z(t)\leq 2C(q,\mu)z^\frac{q}{2}(t).
\end{equation*}
We can argue as the previous case by choosing 
for any $t\in(0,b)$ the function
\begin{equation*}\label{xf6}z(t):=\frac{b}{(b-t)^\frac{2}{q-2}},\,\,\,b:=b(q-2,\alpha)=\left((q-2)\Gamma(2-\alpha)\right)^\frac{2}{4-2\alpha-q}.\end{equation*}
Similarly, for any $t\in(0,b)$ and any $\tau\in(0,t)$, we obtain
\begin{align*}\label{xf7}\frac{\partial}{\partial\tau}z(\tau):&=\frac{2b}{(q-2)(b-\tau)^\frac{q}{q-2}}\\&\leq\frac{2b}{(q-2)(b-t)^\frac{q}{q-2}}\\&=\frac{z^\frac{q}{2}(t)}{2(q-2)b^\frac{q-2}{2}}.
\end{align*}
Finally, $z(t)$ diverges at $t=b$ for
$$T_*\leq b=b(q-2,\alpha).$$

$(c)$ For $p\geq2, m>1, q\geq1$ and $\lambda_1(\Omega),\gamma>0, \mu<0$, inequality \eqref{xf1} yields
 \begin{equation*}\label{xf8}\mathcal{D}_{0|t}^{\alpha }z(t)\leq 2 C(m,\gamma) z^\frac{m+1}{2}(t).
\end{equation*}

$(d)$ For $p\geq2, m+1=q>2$ and $ \gamma, \mu, \lambda_1(\Omega)>0$, using \eqref{xf1}  we have
 \begin{equation*}\label{xf11}\mathcal{D}_{0|t}^{\alpha }z(t)\leq 2 \left[C(m,\gamma)+C(p,\mu)\right] z^\frac{q}{2}(t).
\end{equation*}
Proof of $(c)$ and $(d)$ can be derived from the previous cases. We just omit it. The proof is complete.
\proofend

\subsection{Global solution} In this subsection, we prove the existence of global solutions of problem \eqref{01}. 
\begin{theorem}\label{GS} Assume that $u_0\in W^{s,p}_0(\Omega)\cap L^\infty(\Omega),\,s\in(0,1),\, u_0\geq0$, and let $p, q, m, \gamma, \mu$ satisfy one of the following conditions:
\\
$(a)$ $p=m+1=q>2$ and $0<\gamma+\mu\leq\lambda_1(\Omega);$\\
$(b)$ $p=q$ or $p=m+1$ and  $0\leq\gamma, \mu\leq\lambda_1(\Omega);$  
\\
$(c)$ $p\leq m+q$ and $\gamma,\mu\in\mathbb{R};$
\\$(d)$ $p\geq2, m>1, q\geq1$ and $\gamma,\mu\leq0;$
\\$(e)$ $p=q,  m>1$ and $\gamma\leq0,\,\mu>0$.\\
Then the problem \eqref{01} admits a global in time positive solution.\end{theorem}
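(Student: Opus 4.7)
The plan is to establish global existence by combining the local well-posedness of Theorem \ref{thloc} with an a priori $L^\infty$ bound produced via the comparison principle (Theorem \ref{CP}). In each of the five cases I would exhibit a nonnegative weak supersolution $v\in\Pi$ of \eqref{01} satisfying $v(x,0)\geq u_0(x)$ in $\Omega$ and $v=0$ on $\partial\Omega$; Theorem \ref{CP} then forces $0\leq u(x,t)\leq v(x,t)$ throughout the common existence interval (the lower bound by Corollary \ref{cor1}), and a standard continuation argument rules out finite-time blow-up, hence gives a global solution.

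For cases $(a)$, $(b)$ and $(e)$ I would use the separable ansatz $v(x,t)=f(t)\xi(x)$, where $\xi>0$ is the first eigenfunction from \eqref{EF01}. By the positive homogeneity of $(-\Delta)^s_p$, $(-\Delta)^s_p v=f^{p-1}(t)\lambda_1(\Omega)\xi^{p-1}(x)$, so the supersolution inequality reduces to
\begin{equation*}
\xi(x)\,\mathcal{D}^\alpha_{0|t}f(t)+\lambda_1(\Omega)f^{p-1}(t)\xi^{p-1}(x)\geq \gamma f^{m}(t)\xi^{m}(x)+\mu f^{q-1}(t)\xi^{q-1}(x).
\end{equation*}
In $(a)$, $p=m+1=q$ collapses the powers and the hypothesis $\gamma+\mu\leq\lambda_1$ lets us take $f\equiv M$ constant with $M$ large enough that $M\xi\geq u_0$, which is possible since $\xi>0$ in $\Omega$ and $u_0\in W^{s,p}_0\cap L^\infty$. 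Case $(b)$ is identical after observing that only one of the two power terms is critical while the other vanishes or matches the $p$-scale, so the same constant $v=M\xi$ works. Case $(e)$ uses the sign of $\gamma\leq 0$ to drop the $m$-term from the supersolution inequality, leaving a matched $p=q$ inequality handled exactly as in $(a)$.

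For case $(d)$ with $\gamma,\mu\leq 0$ both nonlinearities are absorbing, so the same $v(x,t)=M\xi(x)$ is trivially a supersolution: $\mathcal{D}^\alpha_{0|t}v=0$ and $(-\Delta)^s_p v=M^{p-1}\lambda_1\xi^{p-1}\geq 0\geq \gamma v^m+\mu v^{q-1}$, completing this case immediately.

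The main obstacle is case $(c)$, where $\gamma,\mu$ are unrestricted in sign and no direct supersolution ansatz is available. My plan is to argue via the energy estimates already used in Theorem \ref{thloc}: test \eqref{01} by $u$, invoke Lemma \ref{AA} to get
\begin{equation*}
\tfrac{1}{2}\mathcal{D}^\alpha_{0|t}\|u\|_{L^2(\Omega)}^{2}+[u]_{W^{s,p}(\Omega)}^{p}\leq |\gamma|\,\|u\|_{L^{m+1}(\Omega)}^{m+1}+|\mu|\,\|u\|_{L^{q}(\Omega)}^{q},
\end{equation*}
then interpolate the right-hand norms between $L^2$ and $W^{s,p}$ by the fractional Gagliardo--Nirenberg inequality (Lemma \ref{FCKN}). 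The hypothesis $p\leq m+q$ is precisely what makes the interpolation exponents admissible and permits Young's inequality to absorb the $W^{s,p}$ contribution into the dissipative $[u]_{W^{s,p}}^{p}$ term on the left. This yields a fractional ODI $\mathcal{D}^\alpha_{0|t}\Phi(t)\leq C\Phi(t)+C'$ with $\Phi=\|u\|_{L^2}^2$, and the Mittag--Leffler Gronwall lemma provides an a priori bound for all $t\geq 0$, which lifts to $L^\infty$ through the embedding \eqref{CCW}. The technically heaviest step will be verifying that the chosen Gagliardo--Nirenberg exponents are simultaneously compatible with $p\leq m+q$ and with the Young absorption, and that the resulting Gronwall estimate is genuinely global rather than only valid up to some threshold time depending on $\|u_0\|$.
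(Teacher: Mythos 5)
Your supersolution ansatz $v=M\xi$ with $\xi$ the first eigenfunction of \eqref{EF01} on $\Omega$ has a gap: since $\xi\in W^{s,p}_0(\Omega)$ vanishes on $\partial\Omega$, one has $\inf_\Omega\xi=0$, so there is in general no $M$ with $M\xi\geq u_0$ on $\Omega$. Membership of $u_0$ in $W^{s,p}_0(\Omega)\cap L^\infty(\Omega)$ does not control the \emph{rate} at which $u_0$ vanishes near $\partial\Omega$; if $u_0$ decays more slowly than $\xi$, the ratio $u_0/\xi$ is unbounded and the ordering fails. The paper sidesteps this by taking $\Omega\subset\subset\Omega^*$, letting $\psi$ be the first eigenfunction on the \emph{larger} domain $\Omega^*$, and using $\beta:=\inf_\Omega\psi>0$ (valid because $\overline\Omega$ is compactly contained in $\Omega^*$ where $\psi>0$) together with the domain-monotonicity $\lambda_1(\Omega^*)\leq\lambda_1(\Omega)$ from Lemma~\ref{LL}. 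Without this enlargement, your cases $(a)$, $(b)$, $(d)$ are incomplete. Your case $(e)$ has a further difficulty: dropping the $\gamma$-term in the supersolution inequality leaves $\lambda_1 M^{p-1}\xi^{p-1}\geq\mu M^{q-1}\xi^{q-1}$, which after cancellation (using $p=q$) becomes $\lambda_1(\Omega)\geq\mu$ --- but this is \emph{not} among the hypotheses of case $(e)$. The paper instead runs an energy argument and the Poincar\'e constant, imposing a condition relating $C_{N,s,p}$, $\mu$ and $\lambda_1(\Omega)$.

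For case $(c)$ you take a genuinely different route (test by $u$, Gagliardo--Nirenberg interpolation, Young absorption, fractional Gronwall) than the paper, which attempts a supersolution/comparison argument built around an eigenfunction identity and a H\"older--Young splitting of $u^{p-1}$ under $p-1\leq m+q-1$. Your energy route is in fact more transparent, but you must still supply the exponent bookkeeping: with $\|u\|_{L^{m+1}}\leq C[u]_{W^{s,p}}^a\|u\|_{L^2}^{1-a}$ you need $a(m+1)<p$ (and similarly $a_1 q<p$ for the $\mu$-term) for Young to put the $[u]_{W^{s,p}}$-powers strictly below the dissipative $[u]_{W^{s,p}}^p$; whether $p\leq m+q$ alone guarantees admissible $a,a_1\in(0,1]$ in all dimensions and for all $s\in(0,1)$ is precisely the ``technically heaviest step'' you flag, and it needs to be carried out rather than asserted. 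Finally, note that once you have a time-independent supersolution, no continuation argument is needed (comparison on $[0,T]$ holds for every $T$); for the energy route, invoking ``continuation'' for Caputo-type equations is itself nontrivial because of the memory in $\mathcal{D}^\alpha_{0|t}$, so it would be safer to phrase the conclusion as a uniform-in-$T$ a priori bound that precludes the blow-up alternative of Theorem~\ref{thloc}.
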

\begin{remark}
Note that in the limiting case $\alpha\rightarrow 1$ and $s\rightarrow 1,$ the results of Theorem \ref{GS} coincides with the results obtained in \cite{Li}.
\end{remark}
\proof {\it of Theorem \ref{GS}} $(a)$  Let $\Omega^*\subset\mathbb{R}^N$ be a smooth domain such that $\Omega\subset\subset\Omega^*$.

Define $\psi$ and $\lambda_1(\Omega^*)$ to be the first eigenfunction and the first eigenvalue related to the Dirichlet problem:
\begin{equation*}\label{EF1} 
\left\{\begin{array}{l}
(-\Delta)^s_p\psi(x)=\lambda_1(\Omega^*)|\psi(x)|^{p-2}\psi(x),\,\,\, x\in\Omega^*,\\{}\\
\psi(x)=0,\,\,\, x\in \mathbb{R}^N\setminus\Omega^*, \end{array}\right.\end{equation*}
with $\large\displaystyle\int_{\Omega^*}|\psi(x)|^{p}dx=1$, for more details see \cite[Lemma 15]{Lindgren}. 
Then, from Lemma \ref{LL} we have $\lambda_1(\Omega^*)\leq\lambda_1(\Omega)$, where $\lambda_1(\Omega)$ is the  first eigenvalue of \eqref{EF}. Moreover, in view of \cite[Theorem 16]{Lindgren}, we can choose a suitable $\Omega^*$ and $\theta>0$ which satisfies $\theta\leq\lambda_1(\Omega^*)\leq\lambda_1(\Omega)$ . Therefore, let $K$ be so large such that  $$w=K\psi\geq K\beta\geq\|u_0\|_{L^\infty(\Omega)},$$ where $\beta=\inf_{\Omega}\psi>0,$ which we note that $\psi>0$ in $\Omega$ from the results of Lindgren and Lindqvist in \cite[Theorem 5]{Lindgren}.
Following that, a simple calculation shows that for each nonnegative test-function $\varphi=\varphi(x,t)\in \Pi\cap W_0^{s,p}(\Omega;L^\infty(0,T))$, we have \begin{equation}\label{GES01}\begin{split}
\int_0^T\int_{\Omega}\mathcal{D}_{0|t}^{\alpha }w\varphi dxdt&+\int_0^T\langle(-\Delta)^s_pw,\varphi\rangle dt \\&=\gamma\int_0^T\int_{\Omega}w^m\varphi dxdt+\mu\int_0^T\int_{\Omega}w^{q-1}\varphi dxdt,\end{split}\end{equation}
where $\langle\cdot,\cdot\rangle$ is  the inner product.
Hence, noting that $p=m+1=q>2,$ and choosing $\theta:=\gamma+\mu$, the last identity takes the form
\begin{equation*}\label{GS4}\begin{split}\int_0^T\int_{\Omega}\mathcal{D}_{0|t}^{\alpha }w\varphi dxdt+\int_0^T\langle(-\Delta)^s_pw,\varphi\rangle dt&=\lambda_1(\Omega)\int_0^T\int_{\Omega}w^{p-1}\varphi dxdt\\&\geq\lambda_1(\Omega^*)\int_0^T\int_{\Omega}w^{p-1}\varphi dxdt \\&\geq(\gamma+\mu)\int_0^T\int_{\Omega}w^{p-1}\varphi dxdt.\end{split}\end{equation*}
It follows that $w=K\psi$ is a weak supersolution of problem \eqref{01}.  From Theorem \ref{CP}, we have $0\leq u\leq w$ almost everywhere in $\Omega_T$.
It is also important to note that the function $w$ is independent of $t$, allowing us to continue the method at any time interval $[T, T']$. As a result, we may say that the solution to \eqref{01} is global in time.

$(b)$ Due to the expression  \eqref{GES01} and the conditions $p=q$ or $p=m+1$, it follows that 
\begin{equation*}\label{GE02}\begin{split}
&\int_0^T\int_{\Omega}\mathcal{D}_{0|t}^{\alpha }w\varphi dxdt+\int_0^T\langle(-\Delta)^s_pw,\varphi\rangle dt \geq\mu\int_0^T\int_{\Omega}w^{p-1}\varphi dxdt\end{split}\end{equation*}
and
\begin{equation*}\label{GES03}\begin{split}
&\int_0^T\int_{\Omega}\mathcal{D}_{0|t}^{\alpha }w\varphi dxdt+\int_0^T\langle(-\Delta)^s_pw,\varphi\rangle dt \geq\gamma\int_0^T\int_{\Omega}w^{p-1}\varphi dxdt.\end{split}\end{equation*}
Since, $0\leq\gamma,\mu\leq\lambda_1(\Omega)$, then the function $w=K\psi$ is also a weak supersolution of \eqref{01}. The conclusion is established using the same argument as before.

$(c)$ From Definition \ref{FPLD} assume that $u$ is an eigenfunction associated to the eigenvalue $\lambda_1(\Omega)$, which is a nonnegative \cite[Theorem 5]{Lindgren}. Then by \eqref{GES01} it follows that 
\begin{equation}\label{GS02}\begin{split}
\lambda_1(\Omega)\int_0^T\int_{\Omega}u^{p-1}\varphi dxdt =\gamma\int_0^T\int_{\Omega}u^m\varphi dxdt+\mu\int_0^T\int_{\Omega}u^{q-1}\varphi dxdt.\end{split}\end{equation}
Choosing constants $r, r'$ such as
\begin{equation*}\begin{split}
\frac{1}{r}+\frac{1}{r'}=1,\,\,\,r, r'>1\,\,\,\text{and}\,\,\,p-1=\frac{m}{r}+\frac{q-1}{r'}\leq m+q-1, \end{split}\end{equation*}
we obtain
\begin{equation*}\begin{split}
\int_0^T\int_{\Omega}u^{p-1}\varphi dxdt =\int_0^T\int_{\Omega}u^{\frac{m}{r}+\frac{q-1}{r'}}\varphi^{\frac{1}{r}+\frac{1}{r'}}dxdt.\end{split}\end{equation*}
Using the H\"{o}lder and $\varepsilon$-Young inequalities to the last expression, it follows that 
\begin{equation*}\label{GES02}\begin{split}
\int_0^T\int_{\Omega}u^{p-1}\varphi dxdt &\leq\left(\int_0^T\int_{\Omega}u^m\varphi dxdt\right)^{\frac{1}{r}}\left(\int_0^T\int_{\Omega}u^{q-1}\varphi dxdt\right)^{\frac{1}{r'}}
\\&\leq\varepsilon\int_0^T\int_{\Omega}u^m\varphi dxdt+C(\varepsilon)\int_0^T\int_{\Omega}u^{q-1}\varphi dxdt.\end{split}\end{equation*}
Therefore, the identity \eqref{GS02} becomes
\begin{equation*}\label{GS03}\begin{split}
\gamma\int_0^T\int_{\Omega}u^m\varphi dxdt&+\mu\int_0^T\int_{\Omega}u^{q-1}\varphi dxdt\\&\leq\lambda_1(\Omega)\varepsilon\int_0^T\int_{\Omega}u^m\varphi dxdt+\lambda_1(\Omega) C(\varepsilon)\int_0^T\int_{\Omega}u^{q-1}\varphi dxdt.\end{split}\end{equation*}

Now, taking $\varepsilon$ small enough, such that $\lambda_1(\Omega)\varepsilon-\gamma\geq0$ and $\lambda_1(\Omega) C(\varepsilon)-\mu\geq0$, we can get that the last inequality will be non-positive
\begin{equation*}\label{GS031}\begin{split}
(\lambda_1(\Omega)\varepsilon-\gamma)\int_0^T\int_{\Omega}u^m\varphi dxdt+(\lambda_1(\Omega) C(\varepsilon)-\mu)\int_0^T\int_{\Omega}u^{q-1}\varphi dxdt\geq 0,\end{split}\end{equation*}
which completes our proof by the comparison principle.

$(d)$  We proceed by multiplying each term of \eqref{01} by $u\geq0$ and then integrating over $\Omega$. Thus,  we obtain
\begin{equation}\label{GES1}\begin{split} \int_\Omega [\mathcal{D}_{0|t}^{\alpha }u]udx&=-\langle(-\Delta)^s_pu,u\rangle+\gamma\int_\Omega u^{m+1}dx+\mu\int_\Omega u^{q}dx\end{split}
\end{equation}
and taking into account that $\gamma,\mu\leq0,$ $\langle(-\Delta)^s_pu,u\rangle\geq0$, it follows that 
\begin{equation*}\begin{split}
\int_\Omega[\mathcal{D}_{0|t}^{\alpha }u]udx&\leq0.    
\end{split}\end{equation*}
By Lemma \ref{AA}, it implies 
\begin{equation*}\begin{split}
 \int_\Omega \mathcal{D}_{0|t}^{\alpha}u^2dx\leq 0.    
\end{split}\end{equation*}
Moreover, the Caputo derivative depends on the variable $t$, and the last expression can be rewritten as
\begin{equation}\label{LLQ}\begin{split}
 \mathcal{D}_{0|t}^{\alpha}\int_\Omega u^2dx\leq 0.    
\end{split}\end{equation}
Hence, applying the left Riemann-Liouville integral $I_{0|t}^{\alpha}$ to the inequality \eqref{LLQ} and using Property \ref{ID}, we obtain
\begin{equation*}\begin{split}\int_\Omega u^2(x,t)dx\leq\int_\Omega u^2(x,0)dx.\end{split}\end{equation*}
Finally, using $u_0\geq0$ and Corollary \ref{cor1}, we get
$$\|u(\cdot,t)\|_{L^2(\Omega)}\leq\|u_0\|_{L^2(\Omega)},\,\,\,\text{for all}\,\,\,t\geq0.$$

$(e)$ Without loss of generality, for $\gamma\leq0,\,\mu>0$  we can get from \eqref{GES1}, by \eqref{PSP}, that
\begin{equation*}\begin{split} \int_\Omega [\mathcal{D}_{0|t}^{\alpha }u]udx&\leq-C_{N,s,p}\int_{\Omega}\int_{\Omega}\frac{|u(x)-u(y)|^{p}}{|x-y|^{N+sp}}dxdy+\mu\int_\Omega u^{q}dx.\end{split}
\end{equation*}
Then, by Lemma \ref{Sharp}  for $p=q$, we obtain
\begin{equation*}\begin{split} \int_\Omega [\mathcal{D}_{0|t}^{\alpha }u]udx&\leq-C_{N,s,p}[u]^p_{W^{s,p}(\Omega)}+ \mu\lambda_1(\Omega)[u]^p_{W^{s,p}(\Omega)}.\end{split}
\end{equation*}
Using the fact that  $\lambda_1(\Omega)$ coincides with the sharp constant in Lemma \ref{Sharp} \cite[page 2]{Brasco3} we choose the domain such that  $\large\displaystyle C_{N,s,p}\geq\mu\lambda_1(\Omega)\geq\frac{\mu}{\mathcal{I}_{N,s,p(\Omega)}}$ holds, which gives us
\begin{equation*}\begin{split} \int_\Omega [\mathcal{D}_{0|t}^{\alpha }u]udx&\leq0.\end{split}
\end{equation*}
Accordingly, the conclusion follows as in the previous case. \proofend

\subsection{Asymptotic behavior of solution}
In this subsection, we give the time-decay estimates of global solutions
of problem \eqref{01}.
\begin{theorem}\label{DS} Assume that $u_0>0$ and that one of the following conditions holds:\\
$(a)$ $m=q-1>0$ and $\gamma+\mu< 0;$\\
$(b)$ $m>0, q>1$ and  $\gamma< 0,\,\mu=0;$\\
$(c)$ $m>0, q>1$ and  $\gamma=0,\,\mu< 0$.\\
Then the positive global solution to problem \eqref{01} satisfies the estimate
\begin{equation*}
0<u(x,t)\leq \frac{M}{1+t^\frac{\alpha}{r}},\,t\geq 0,\,x\in\Omega,
\end{equation*} where $M$ is a positive constant dependent of $u_0,$ and $r=m$ in cases (a), (b) and $r=q-1$ in cases (a), (c).
\end{theorem}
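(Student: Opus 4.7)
The plan is to apply the comparison principle (Theorem~\ref{CP}) using a spatially homogeneous supersolution $v(x,t)=f(t)$. Because $(-\Delta)^s_p$ annihilates any $x$-independent function, such a $v$ is a weak supersolution of \eqref{01} exactly when
$$\mathcal{D}_{0|t}^{\alpha}f(t)\geq \gamma f(t)^m+\mu f(t)^{q-1},\qquad f(0)\geq \|u_0\|_{L^\infty(\Omega)}.$$
In each of the hypotheses (a), (b), (c) the two right-hand terms either share the exponent $m=q-1$ (case (a)) or one of them vanishes (cases (b), (c)), so the requirement collapses to a single-power scalar fractional inequality
$$\mathcal{D}_{0|t}^{\alpha}f(t)+c\,f(t)^r\geq 0,$$
with $r$ as in the statement and $c=-(\gamma+\mu)>0$, $c=-\gamma>0$, $c=-\mu>0$ in cases (a), (b), (c) respectively.

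To construct such an $f$ I would use the explicit decay profile
$$f(t):=M\,(1+\delta t^\alpha)^{-1/r},\qquad M:=\|u_0\|_{L^\infty(\Omega)},$$
with $\delta>0$ to be chosen sufficiently small. The structural virtue of this choice is that $f(t)^{-r}=M^{-r}(1+\delta t^\alpha)$ is affine in $t^\alpha$, so $\mathcal{D}_{0|t}^\alpha f^{-r}(t)=M^{-r}\delta\,\Gamma(\alpha+1)$ is an explicit constant, and $f(t)\leq M\min\{1,(\delta t^\alpha)^{-1/r}\}$ gives precisely the target form $M'/(1+t^{\alpha/r})$ after absorbing constants. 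The technical heart of the argument is the estimate
$$|\mathcal{D}_{0|t}^\alpha f(t)|\leq K(\alpha,r,M)\,\delta\,f(t)^r,\qquad t>0,$$
with $K$ bounded independently of $\delta$. I would prove this by direct computation from
$$\mathcal{D}_{0|t}^\alpha f(t)=-\frac{M\delta\alpha}{r\,\Gamma(1-\alpha)}\int_0^t (t-s)^{-\alpha}(1+\delta s^\alpha)^{-\frac{1}{r}-1}s^{\alpha-1}\,ds,$$
splitting the integral at the transition time $s_\ast:=\delta^{-1/\alpha}$, where $\delta s_\ast^\alpha=1$. On $[0,\min\{s_\ast,t\}]$ the kernel $(1+\delta s^\alpha)^{-1/r-1}$ is bounded by $1$ and the remaining Beta-type integral is controlled by $\Gamma(\alpha)\Gamma(1-\alpha)$; on $[s_\ast,t]$ (when $t>s_\ast$) the same kernel is bounded by $(\delta s^\alpha)^{-1/r-1}$, and a rescaling $\sigma=s/t$ identifies the resulting integral as comparable to $\delta f(t)^r/M^{r-1}$. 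Choosing $\delta\leq c/K$ then turns $f$ into a supersolution of the scalar ODI, and hence of the PDE. This is the main obstacle I anticipate: the two pieces contribute on competing scales, and one must verify that $K$ stays bounded as $\delta\to 0$ and for all $t>0$.

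With $v(x,t)=f(t)$ established as a weak supersolution, Theorem~\ref{CP} applies in every case: in (b) and (c) the second sign hypothesis $\gamma\leq 0,\,\mu\leq 0$ is met (with the vanishing coefficient belonging to either sign); in case (a) the hypothesis is verified sub-case-wise according to the individual signs of $\gamma$ and $\mu$, the auxiliary restriction $m\geq 1$ in cases 3 and 4 of Theorem~\ref{CP} being harmless under the assumption $m=q-1>0$ together with $\gamma\mu<0$. Combining $u\leq f$ with the positivity $u>0$ provided by Corollary~\ref{cor1} and absorbing constants in the explicit decay of $f$ delivers
$$0<u(x,t)\leq \frac{M'}{1+t^{\alpha/r}},\qquad t\geq 0,\ x\in\Omega,$$
as required.
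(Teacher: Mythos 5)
Your overall strategy coincides with the paper's at the structural level: both reduce the problem to a spatially homogeneous supersolution $v(x,t)=f(t)$, exploit $(-\Delta)^s_p f=0$ to collapse the PDE to a scalar fractional differential inequality, and then conclude by the comparison principle (Theorem~\ref{CP}). Where you diverge is in the decisive technical step: the paper does not construct a supersolution by hand, but instead takes $f$ to be the actual \emph{solution} of the scalar Cauchy problem $\mathcal{D}_{0|t}^{\alpha}f+\nu f^{m}=0$, $\nu=-(\gamma+\mu)>0$, and then invokes the optimal decay estimate from Vergara--Zacher \cite[Theorem 7.1]{Vergara2} to assert $f(t)\leq M/(1+t^{\alpha/r})$. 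You instead posit the explicit profile $f(t)=M(1+\delta t^\alpha)^{-1/r}$, compute its Caputo derivative directly, and try to verify the supersolution inequality by a two-regime estimate split at $s_\ast=\delta^{-1/\alpha}$. This is a self-contained and instructive alternative, and the key estimate $|\mathcal{D}_{0|t}^\alpha f(t)|\leq K\delta f(t)^r$ with $K$ independent of $\delta$ does appear to be attainable by the scaling analysis you sketch (the $[0,s_\ast]$ piece is controlled by the Beta integral, the $[s_\ast,t]$ piece by the power-law tail, and both are comparable to $\delta f(t)^r$ after the substitution $\sigma=s/t$). However, you stop at the sketch: the bound is the entire content of the argument, and it must actually be carried out and shown uniform in $t>0$ before the route is closed. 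The paper's citation of Vergara--Zacher makes the proof shorter and places the burden on a result tailored to this exact equation; your construction trades that external dependence for an explicit computation.

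One caution: your remark that the restriction $m\geq 1$ in conditions 3 and 4 of Theorem~\ref{CP} is ``harmless'' in case (a) when $\gamma\mu<0$ is not justified. If $0<m=q-1<1$ and $\gamma>0,\ \mu<0$ (which is compatible with $\gamma+\mu<0$), none of the four hypothesis sets of Theorem~\ref{CP} applies, since sets 1 and 3 need $m\geq 1$ and sets 2 and 4 need $\gamma\leq 0$. The same gap is present but unaddressed in the paper's own proof (which invokes comparison without checking the CP hypotheses), so this is a genuine subtlety worth flagging rather than an error specific to your approach; but it should be stated as an open hypothesis on the sign pattern and on $m$, not waved away as harmless.
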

\proof
(a) Let us consider the function $v(x,t):=v(t)>0$ for all $x\in\overline{\Omega}$. Then it follows that
$$\mathcal{D}_{0|t}^{\alpha }v(t)+(-\Delta)^s_pv(t)=\gamma v^{m}(t)+\mu v^{q-1}(t).$$

According to the fact that $(-\Delta)^s_pv(t)=0$ and $m=q-1>0$, $\gamma+\mu< 0$, the last expression can be rewritten in the following form  
\begin{equation}\label{Eq*}\mathcal{D}_{0|t}^{\alpha }v(t)+\nu v(t)^{m}=0,\,\nu=-(\gamma+\mu)>0,
\end{equation}
which ensures that $v(t)$ satisfies \eqref{01} with the initial data $0<\max\limits_{x\in\Omega}u_0(x)\leq v_0$.

It is known from the results of Zacher and Vergara in \cite[Theorem 7.1]{Vergara2}, that if $v_0>0, \nu>0, m>0$, then the solution to equation \eqref{Eq*} satisfies estimate $v(t)\leq\frac{M}{1+t^\frac{\alpha}{r}},$ for all $t\geq 0.$ As $0<u_0(x)\leq v_0,$ then $v(t)$ is a supersolution of problem \eqref{01}. This completes the proof.

Cases (b) and (c) are proved in a similar way, completely repeating the above calculations. 

The proof is complete.
\proofend

\section*{\small
 Conflict of interest} %%%%%%%%%%%%%%%%%%%%

 {\small
 The authors declare that they have no conflict of interest.}

\begin{acknowledgements}
This research has been funded by the Science Committee of the Ministry of Education and Science of the Republic of Kazakhstan (Grant No. AP14972726) and by the FWO Odysseus 1 grant G.0H94.18N: Analysis and Partial Differential Equations. Michael Ruzhansky was supported by the EPSRC grant EP/R003025/2 and by the Methusalem programme of the Ghent University Special Research Fund (BOF)(Grant number 01M01021).
\end{acknowledgements}

% BibTeX users please use one of
%\bibliographystyle{spbasic}      % basic style, author-year citations
%\bibliographystyle{spmpsci}      % mathematics and physical sciences
%\bibliographystyle{spphys}       % APS-like style for physics
%\bibliography{}   % name your BibTeX data base

% Non-BibTeX users please use

\end{document}